\newcommand{\E}{{ \operatorname E}}
\newcommand{\Cov}{{\operatorname{Cov}}}
\newcommand{\argmax}{{\operatorname{argmax}}}
\newcommand{\sargmax}{{\operatorname{sargmax}}}
\newcommand{\rank}{{\operatorname{rank}}}
\newcommand{\sign}{{\operatorname{sign}}}
\newcommand{\ignore}[1]{}
\theoremstyle{plain}
\newtheorem{theorem}{Theorem}
\newtheorem{proposition}{Proposition}
\newtheorem{Lem}{Lemma}
\theoremstyle{definition}
\newtheorem{assumption}{Assumption}
\theoremstyle{remark}
\newtheorem{remark}{Remark}
\small\color{black},
\ttfamily\color{gray},
\ttfamily\color{gray}\footnotesize,
\begin{document}

\begin{frontmatter}

% "Title of the Paper
%\title[Robust change point estimation]{Robust change point estimation in long memory time series models}
\title{Change point estimation based on  Wilcoxon tests in the presence of long-range dependence
%\thanksref{t1}
}
%\thankstext{t1}{???}
\runtitle{Wilcoxon-based change point estimation}

\thankstext{t2}{Research supported by the German National Academic Foundation and Collaborative Research Center SFB 823 {\em Statistical modelling of nonlinear dynamic processes}.}

% indicate corresponding author with \corref{}
% \author{\fnms{John} \snm{Smith}\thanksref{t2}\corref{}\ead[label=e1]{smith@foo.com}\ead[label=e2,url]{www.foo.com}}
% \thankstext{t2}{Thanks to somebody} 
% \address{line 1\\ line 2\\ \printead{e1}\\ \printead{e2}}

\author{\fnms{Annika} \snm{Betken}\ead[label=e1]{annika.betken@rub.de}}
\address{Faculty of Mathematics\\
Ruhr-Universit{\"a}t Bochum\\
 44780  Bochum,
Germany\\  \printead{e1}}
%\and
%\author{\fnms{???} \snm{???}\ead[label=e2]{???}}
%\address{\printead{e2}}

\runauthor{A. Betken}

\begin{abstract}
We consider an estimator for the location of a shift in the mean of long-range dependent sequences.  The estimation is based on  the two-sample Wilcoxon statistic. Consistency and the rate of convergence for the estimated change point are established. In the case of a constant shift height, the   $1/n$ convergence rate (with $n$ denoting the number of observations), which is typical under the assumption of independent observations, is also achieved for long memory sequences.
 It is proved that if  the change point height  decreases to $0$ with a certain rate, the suitably standardized estimator converges in distribution to a functional of a fractional Brownian motion. The estimator is tested on two well-known data sets.
Finite sample behaviors are investigated in a Monte Carlo simulation study. 
\end{abstract}

\begin{keyword}[class=MSC]
\kwd[Primary ]{62G05}
\kwd{62M10}
\kwd[; secondary ]{60G15}
\kwd{60G22}
\end{keyword}

%\subjclass[2010]{Primary  	62G05 (Estimation [Nonparametric inference]),  	60G15   	Gaussian processes, 62M10 (Time series [statistics]),  	60G22  (Fractional processes, including fractional Brownian motion [Stochastic processes] ; Secondary}

\begin{keyword}
\kwd{change point estimation}
\kwd{long-range dependence}
\kwd{Wilcoxon test}
\kwd{self-normalization}
\end{keyword}

% history:
% \received{\smonth{1} \syear{0000}}
\setcounter{page}{1}
\tableofcontents

\end{frontmatter}

%--------EINLEITUNG----
\section{Introduction}

Suppose that the observations $X_1, \ldots, X_n$ are generated by a stochastic process $\left(X_i\right)_{i\geq 1}$
\begin{align*}
X_i=\mu_i+Y_i,
\end{align*} 
where $(\mu_i)_{i\geq 1}$ are unknown constants and where $(Y_i)_{i\geq 1}$ is a stationary, long-range dependent (LRD, in short) process   with mean zero.
A stationary process  $(Y_i)_{i\geq 1}$ is called ``long-range dependent''  if 
its autocovariance function $\rho$, $\rho(k):=\Cov(Y_1, Y_{k+1})$,  satisfies
\begin{align}\label{autocovariances}
\rho(k)\sim k^{-D}L(k), \  \text{as } k\rightarrow \infty, 
\end{align}
where $0<D< 1$ (referred to as long-range dependence (LRD) parameter)  and where $L$ is a slowly varying function.

Furthermore, we assume that there is a change point in the mean of the observations, that is 
\begin{align*}
\mu_i=\begin{cases}
\mu,    &  \text{for} \ i=1, \ldots, k_0, \\
\mu + h_n,    & \text{for} \ i=k_0+1, \ldots, n,
\end{cases}
\end{align*}
where $k_0=\lfloor n\tau\rfloor$ denotes the change point location and  $h_n$ is the height of the level-shift.

In the following we differentiate between fixed and local changes. Under fixed changes we assume that $h_n=h$ for some $h\neq 0$.
Local changes are characterized  by a sequence $h_n$, $n\in \mathbb{N}$, with $h_n\longrightarrow 0$ as $n\longrightarrow\infty$;   in other words, in a model where the height of the jump  decreases with increasing sample size
$n$.

In order to test the hypothesis
\begin{align*}
H: \mu_1=\ldots =\mu_n
\end{align*}
against the alternative 
\begin{align*}
A: \mu_1=\ldots =\mu_k\neq \mu_{k+1}=\ldots =\mu_n \ \text{for  some $k\in \left\{1, \ldots, n-1\right\}$}
\end{align*}
 the Wilcoxon change point test  can be applied. It rejects the hypothesis for large values of the Wilcoxon test statistic defined by
\begin{align*}
W_n:=\max\limits_{1\leq k\leq n-1}\left|W_{k, n}\right|,  \
\text{where} \  W_{k, n}:=\sum\limits_{i=1}^k\sum\limits_{j=k+1}^n\left(1_{\left\{X_i\leq X_j\right\}}-\frac{1}{2}\right)
\end{align*}
(see \cite{DehlingRoochTaqqu2013a}).
Under the assumption that there is a change point in the mean in $k_0$
we expect the absolute value of $W_{k_0, n}$ to exceed the absolute value of $W_{l, n}$ for any $l\neq k_0$.
Therefore, it seems natural to define an
estimator of $k_0$ by 
\begin{align*}
\hat{k}_W=\hat{k}_W(n):=\min \left\{k : \left|W_{k, n}\right|=\max\limits_{1\leq i\leq n-1}\left|W_{i, n}\right|\right\}.
\end{align*} 
Preceding papers that address the problem of estimating  change point locations in dependent observations $X_1, \ldots, X_n$ with a shift in mean often refer to a family of  estimators  based on the CUSUM change point test statistics $C_{n}(\gamma):=\max_{1\leq k\leq n-1}|C_{k, n}(\gamma)|$, where 
\begin{align*}
C_{k, n}(\gamma):=\left(\frac{k(n-k)}{n}\right)^{1-\gamma}\left(\frac{1}{k}\sum\limits_{i=1}^kX_i-\frac{1}{n-k}\sum\limits_{i=k+1}^n X_i\right)
\end{align*}
with parameter $0\leq \gamma<1$.
The corresponding change point estimator is defined by
\begin{align}\label{eq:CUSUM_estimator}
\hat{k}_{C, \gamma}=\hat{k}_{C, \gamma}(n):=\min \left\{k : \left|C_{k, n}(\gamma)\right|=\max\limits_{1\leq i\leq n-1}\left|C_{i, n}(\gamma)\right|\right\}.
\end{align}
For long-range dependent Gaussian processes \cite{HorvathKokoszka1997} derive the asymptotic distribution of the estimator $\hat{k}_{C, \gamma}$ under the assumption of a decreasing jump height $h_n$, i.e. under the assumption that $h_n$ approaches $0$ as the sample size $n$ increases. Under  non-restrictive constraints on the dependence structure of the data-generating process (including long-range dependent time series) \cite{KokoszkaLeipus1998} prove consistency of $\hat{k}_{C, \gamma}$ under the assumption of fixed as well as  decreasing jump heights. Furthermore, they establish the convergence rate of the change point estimator as a function of the intensity of dependence in the data if the jump height is constant.
\cite{HarizWylie2005} show that under a similar assumption on the decay of the autocovariances the convergence rate that is achieved in the case of independent observations can be obtained for short- and long-range dependent data, as well. Furthermore, it is shown in their paper that for a decreasing jump height the convergence rate derived by  \cite{HorvathKokoszka1997}  under the assumption of gaussianity can also be established under more general assumptions on the data-generating sequences. 

\cite{Bai1994} establishes an estimator for the location of a shift in the mean by the method of least squares.
He proves consistency, determines the rate of convergence of the change point estimator and derives its asymptotic distribution. These results are shown to hold for weakly dependent observations that satisfy a linear model  and cover, for example,   ARMA($p$, $q$)-processes.
Bai extended these results to the estimation of the location of a parameter change in multiple regression models that also allow for lagged dependent variables and trending regressors (see \cite{Bai1997}).
A generalization of these results to  possibly long-range dependent data-generating processes (including fractionally integrated processes) is given in \cite{KuanHsu1998} and \cite{LavielleMoulines2000}.
Under the assumption of independent data \cite{Darkhovskh1976}  establishes an estimator for the location of a change in distribution based on the two-sample Mann-Whitney test statistic. He obtains a   convergence rate  that has order $\frac{1}{n}$, where $n$ is the number of observations.
Allowing for strong dependence in the data 
\cite{GiraitisLeipusSurgailis1996} consider  Kolmogorov-Smirnov and Cram\'{e}r-von-Mises-type  test statistics for the detection of a change in the marginal distribution of the random variables that underlie the observed data.
Consistency of the corresponding change point estimators is proved under the assumption that the jump height approaches $0$.
A change point estimator based on a self-normalized CUSUM test statistic has been applied in \cite{Shao2011} to real data sets. Although Shao assumes validity of using the estimator,  the article does not cover a formal proof of consistency.  Furthermore, it has been noted by \cite{ShaoZhang2010} that even under the assumption of short-range dependence it seems difficult to obtain the asymptotic distribution of the estimate. 

In this paper we  shortly address the issue of estimating the change point location on the basis of the self-normalized Wilcoxon test statistic proposed in \cite{Betken2016}.

In order to construct the self-normalized Wilcoxon test statistic, we have to consider the ranks $R_i$, $i=1,\ldots,n$, of the observations $X_1, \ldots, X_n$.
These  are defined by
$R_i:=\rank(X_i)=\sum_{j=1}^n1_{\{X_j\leq X_i\}}$ for $i=1,\ldots,n$. The self-normalized two-sample test statistic is defined by
\begin{equation*}
SW_{k, n}
=\frac{\sum_{i=1}^kR_i-\frac{k}{n}\sum_{i=1}^nR_i}{\bigg\{\frac{1}{n}\sum_{t=1}^k S_t^2(1,k)+\frac{1}{n}\sum_{t=k+1}^n S_t^2(k+1,n)\bigg\}^{1/2}}, 
\end{equation*}
where 
\begin{equation*}
S_{t}(j, k):=\sum\limits_{h=j}^t\left(R_h-\bar{R}_{j, k}\right)\ \ \text{with }\bar{R}_{j, k}:=\frac{1}{k-j+1}\sum\limits_{t=j}^kR_t.
\end{equation*}
The self-normalized Wilcoxon change point test for the test problem $(H, A)$ rejects the hypothesis for large values of $T_n(\tau_1, \tau_2)=\max_{k\in \left\{\lfloor n\tau_1\rfloor, \ldots,  \lfloor n\tau_2\rfloor\right\}}\left|SW_{k, n}\right|$, where $0< \tau_1 <\tau_2 <1$. Note that the proportion of the data that is included in the calculation of the supremum  is restricted by $\tau_1$ and $\tau_2$. A common  choice for these parameters is $\tau_1= 1-\tau_2=0.15$; see  \cite{Andrews1993}.

A natural change point estimator that results from the self-normalized Wilcoxon test statistic is 
\begin{align*}
\hat{k}_{SW}=\hat{k}_{SW}(n):=\min \left\{k : \left|
SW_{k, n}\right|=\max\limits_{\lfloor n\tau_1\rfloor\leq i\leq \lfloor n\tau_2\rfloor}\left|SW_{i, n}\right|\right\}.
\end{align*} 
We will prove consistency of the estimator $\hat{k}_{SW}$ under fixed changes and under local changes whose height converges to $0$ with a rate depending on the intensity of dependence in the data. 
Nonetheless, the main aim of this paper is to characterize the asymptotic behavior of the change point estimator $\hat{k}_W$.   In Section \ref{Main Results} we  establish consistency of $\hat{k}_W$ and $\hat{k}_{SW}$, derive the optimal convergence rate of $\hat{k}_W$ and finally consider its asymptotic distribution.
Applications to two well-known data sets can be found in Section \ref{Applications}. The finite sample properties of the estimators are investigated  by simulations in Section \ref{Simulations}. Proofs of the theoretical results are given in Section \ref{Proofs}.

%-------MAIN RESULTS---
\section{Main Results}\label{Main Results}

Recall that for  fixed $x$, $x \in \mathbb{R}$, the 
Hermite expansion of $1_{\left\{G(\xi_i)\leq x\right\}}-F(x)$ is given by
\begin{align*}
1_{\left\{G(\xi_i)\leq x\right\}}-F(x)=\sum\limits_{q=1}^{\infty}\frac{J_q(x)}{q !}H_q(\xi_i),
\end{align*}
where $H_q$ denotes the $q$-th order Hermite polynomial and where
\begin{align*}
J_q(x)=\E \left(1_{\left\{G(\xi_i)\leq x\right\}}H_q(\xi_i)\right).
\end{align*}

\begin{assumption}\label{ass:subordination} Let $Y_i=G(\xi_i)$, where $\left(\xi_i\right)_{i\geq 1}$  is a stationary, long-range dependent Gaussian  process   with mean $0$, variance $1$ and LRD parameter $D$. We assume that $0<D <\frac{1}{r}$, where 
 $r$ denotes the Hermite rank of the class of functions $1_{\left\{G(\xi_i)\leq x\right\}}-F(x)$, $x \in \mathbb{R}$, 
 defined by
\begin{align*}
r:=\min \left\{q\geq 1: J_q(x)\neq 0 \ \text{for  some} \ x\in \mathbb{R}\right\}.
\end{align*}
Moreover, we assume that $G:\mathbb{R}\longrightarrow \mathbb{R}$ is a measurable function and that $\left(
Y_i\right)_{i\geq 1}$ has a  continuous distribution function $F$.
\end{assumption}

Let
\begin{align*} 
g_{D, r}(t):=t^{\frac{rD}{2}}L^{-\frac{r}{2}}(t)
\end{align*} and define
\begin{align*}
d_{n, r} :=\frac{n}{g_{D, r}(n)}c_{r}, \
\text{where} \ c_{r}:=\sqrt{\frac{2 r !}{(1-Dr)(2-Dr)}}.
\end{align*}
Since $g_{D, r}$ is a regularly varying function, there exists a function $g_{D, r}^{-}$ such that  
\begin{align*}
g_{D, r}( g_{D, r}^{-}(t))\sim g_{D, r}^{-}( g_{D, r}(t))\sim t, \text{ as $t\rightarrow\infty$,}
\end{align*}
(see Theorem 1.5.12  in  \cite{Bingham1987}).
We refer to $g_{D, r}^{-}$ as the asymptotic inverse of $g_{D, r}$.

The following result states that $\frac{\hat{k}_W}{n}$ and $\frac{\hat{k}_{SW}}{n}$ are consistent estimators for the change point location under fixed as well as certain local changes.

\begin{proposition}\label{Prop:consistency}
Suppose that Assumption \ref{ass:subordination} holds.
Under fixed changes, $\frac{\hat{k}_W}{n}$ and $\frac{\hat{k}_{SW}}{n}$ are consistent estimators for the change point location. The estimators are also consistent under local changes if $h_n^{-1}=o\left(\frac{n}{d_{n, r}}\right)$ and  if $F$ has a bounded
density $f$. In  other words,  we have
\begin{align*}
\frac{\hat{k}_W}{n}\overset{P}{\longrightarrow}\tau, \qquad
\frac{\hat{k}_{SW}}{n}\overset{P}{\longrightarrow}\tau
\end{align*}
in both situations.
Furthermore, it follows that the  Wilcoxon test is consistent under these assumptions (in the sense that $\frac{1}{nd_{n, r}}\max_{1 \leq k\leq n-1}|W_{k, n}|\overset{P}{\longrightarrow}\infty$).
\end{proposition}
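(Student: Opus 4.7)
My plan is to decompose $W_{k,n}$ into a deterministic ``signal'' $E[W_{k,n}]$ and a mean-zero ``noise'' $W_{k,n}-E[W_{k,n}]$, show that the signal (appropriately normalized) converges uniformly in $k$ to a tent function on $[0,1]$ peaked at $\tau$, and bound the noise uniformly at a strictly smaller order; the argmax continuous mapping theorem then yields $\hat{k}_W/n\xrightarrow{P}\tau$, and the statements for $\hat{k}_{SW}$ and for the Wilcoxon test follow as corollaries. For the signal, stationarity of $(Y_i)$ and continuity of $F$ imply that pairs $(i,j)$ on the same side of $k_0$ contribute zero expectation, while the remaining ``straddling'' pairs each contribute $P(Y_i-Y_j\leq -h_n)-\tfrac12$. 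A short computation gives
\[
E[W_{k,n}]=(k\wedge k_0)\bigl((n-k)\wedge(n-k_0)\bigr)\,\bar a_n,
\]
where $\bar a_n$ is a weighted average over the straddling lags $s$ of $P(Y_1-Y_{1+s}\leq -h_n)-\tfrac12$. In the fixed case $|\bar a_n|$ is bounded away from zero; in the local case a dominated-convergence argument using the boundedness of $f$ gives $\bar a_n\sim c\,h_n$ for some $c\neq 0$. Either way, $E[W_{\lfloor nt\rfloor,n}]/(n^2\bar a_n)$ converges uniformly in $t\in[0,1]$ to the tent function $t\mapsto(t\wedge\tau)\bigl(1-(t\vee\tau)\bigr)$, whose unique maximizer is $\tau$.

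For the noise term, I would invoke the uniform Hermite reduction for subordinated LRD Gaussian sequences: under Assumption \ref{ass:subordination},
\[
\frac{1}{d_{n,r}}\sum_{i=1}^{\lfloor nt\rfloor}\bigl(1_{\{Y_i\leq x\}}-F(x)\bigr)\;\approx\;\frac{J_r(x)}{r!}\cdot\frac{1}{d_{n,r}}\sum_{i=1}^{\lfloor nt\rfloor}H_r(\xi_i)
\]
uniformly in $(t,x)$. Writing $W_{k,n}$ as an integral of the partial-sum empirical process of $\{X_i\}_{i\leq k}$ against the empirical measure of $\{X_j\}_{j>k}$ and applying the reduction (together with the change of variable $x\mapsto x+\mu_i$ to absorb the mean shift) yields the two-parameter fluctuation bound
\[
\sup_{1\leq k\leq n-1}\bigl|W_{k,n}-E[W_{k,n}]\bigr|=O_P(n\,d_{n,r}).
\]

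Since $d_{n,r}=o(n)$, in the fixed case the signal of order $n^2$ dominates the noise of order $n\,d_{n,r}$; in the local case the hypothesis $h_n^{-1}=o(n/d_{n,r})$ is exactly $n^2|\bar a_n|\gg n\,d_{n,r}$, so the signal dominates in this regime as well. Uniform convergence of $W_{\lfloor nt\rfloor,n}/(n^2\bar a_n)$ to the tent function plus the argmax continuous mapping theorem gives $\hat k_W/n\xrightarrow{P}\tau$. For the self-normalized variant, the numerator of $SW_{k,n}$ agrees with $\pm W_{k,n}$, so it suffices to show that the denominator is, uniformly on $k\in[\lfloor n\tau_1\rfloor,\lfloor n\tau_2\rfloor]$, sandwiched between positive multiples of $n\,d_{n,r}$ and thus of strictly smaller order than the signal; this follows from analogous partial-sum bounds for the centered rank process $S_t(j,k)$, split into its contributions before and after $k_0$ (where the ranks behave like a stationary sequence composed with a deterministic shift). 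The same argmax argument then gives $\hat k_{SW}/n\xrightarrow{P}\tau$, using $\tau\in(\tau_1,\tau_2)$. Test consistency is immediate from $\max_k|W_{k,n}|/(n\,d_{n,r})\geq|W_{k_0,n}|/(n\,d_{n,r})\asymp n|\bar a_n|/d_{n,r}\to\infty$.

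The main obstacle is the uniform fluctuation bound in the noise step, a two-parameter empirical-process statement (uniform both in the truncation level $x$ and in the partial-sum index $k$) for LRD subordinated sequences; carrying out the Hermite reduction uniformly in $(x,k)$ and handling the cross-term integrals between the two sub-samples requires care. A secondary technical nuisance is the analysis of the self-normalizer, where the rank process $R_h$ is non-stationary under the alternative and one must track the change-point-induced inhomogeneity before the same LRD tools can be applied.
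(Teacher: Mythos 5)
Your treatment of $\hat{k}_W$ and of test consistency is essentially the paper's own argument: the paper's Lemma \ref{Lem:W_process_under_A} is exactly your signal-plus-noise statement, proved by splitting the double sum at $\lfloor n\tau\rfloor$ and invoking the uniform two-parameter empirical-process reduction of \cite{DehlingTaqqu1989} (via Theorem 1.1 of \cite{DehlingRoochTaqqu2013a} and Theorem 3.1 of \cite{DehlingRoochTaqqu2013b}), so the "main obstacle" you flag is handled by citation rather than from scratch; and the paper's $Z_{n,\varepsilon}$ argument is just a hands-on version of your argmax continuous-mapping step applied to the deterministic tent-function limit. That part is fine.

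The $\hat{k}_{SW}$ part, however, contains a genuine error. You claim the self-normalizer $\bigl\{\frac{1}{n}\sum_{t\leq k}S_t^2(1,k)+\frac{1}{n}\sum_{t>k}S_t^2(k+1,n)\bigr\}^{1/2}$ is sandwiched between positive multiples of $n\,d_{n,r}$ uniformly over $k\in[\lfloor n\tau_1\rfloor,\lfloor n\tau_2\rfloor]$. It is not: under the alternative, whenever $|k-k_0|\asymp n$ one of the two segments straddles the change point, and the centered rank partial sums $S_t(j,k)$ over that segment acquire a deterministic tent-shaped drift of magnitude up to order $n^2h_n$ (the ranks on the two sides of $k_0$ differ in location by roughly $n h_n\int f^2$). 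Hence the denominator is of order $n^2h_n$ there --- the \emph{same} order as the numerator's signal, not a strictly smaller one --- and $SW_{k,n}$ converges to a finite deterministic ratio rather than diverging at rate $nh_n/d_{n,r}$ times the tent function. The correct mechanism, which is the one the paper uses (quoting \cite{Betken2016}), is the opposite of yours: for $\lambda$ bounded away from $\tau$, $SW_n(\lfloor n\lambda\rfloor)$ tends in probability to the finite limit $|\delta_\tau(\lambda)|\,\{\int_0^\lambda(\delta_\tau(t)-\tfrac{t}{\lambda}\delta_\tau(\lambda))^2dt+\int_\lambda^1(\delta_\tau(t)-\tfrac{1-t}{1-\lambda}\delta_\tau(\lambda))^2dt\}^{-1/2}$, whereas at $k=k_0$ the deterministic part of the denominator degenerates (one checks $\delta_\tau(t)-\tfrac{t}{\tau}\delta_\tau(\tau)\equiv 0$ for $t\leq\tau$ and likewise on the other side), leaving the denominator at the stochastic order $n\,d_{n,r}$ and forcing $SW_n(k_0)\overset{P}{\longrightarrow}\infty$; consistency then follows because the supremum over $\{k:|k-k_0|\geq n\varepsilon\}$ stays bounded while the value at $k_0$ blows up. Your proposal would need this comparison of a divergent value at $k_0$ against finite limits elsewhere, rather than the uniform sandwich on the normalizer, to go through.
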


The following theorem establishes a convergence rate for the change point estimator $\hat{k}_W$. Note that only  under local changes the convergence rate depends on the intensity of dependence in the data.

\begin{theorem}\label{convergence rate}
Suppose that Assumption \ref{ass:subordination} holds  and let $m_n:=g_{D, r}^{-}(h_n^{-1})$. Then, we have
\begin{align*}
\left|\hat{k}_W-k_0\right|=\mathcal{O}_P(m_n)
\end{align*}
if either 
\begin{itemize}
\item  $h_n =  h$ with $h\neq 0$ 
\end{itemize}
or
\begin{itemize}
\item $\lim_{n\rightarrow \infty}h_n=0$ with $h_n^{-1}=o\left(\frac{n}{d_{n, r}}\right)$ and   $F$ has a bounded
density $f$.
\end{itemize}
\end{theorem}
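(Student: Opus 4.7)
My approach adapts the dyadic peeling strategy used for CUSUM change-point estimators in the long-memory setting (see \cite{HorvathKokoszka1997}, \cite{KokoszkaLeipus1998}, \cite{HarizWylie2005}) to the two-sample Wilcoxon statistic. Since Proposition~\ref{Prop:consistency} already supplies $\hat{k}_W/n\overset{P}{\longrightarrow}\tau$, the rate assertion reduces to showing that for some fixed $\varepsilon\in(0,\min(\tau,1-\tau))$,
\begin{align*}
\lim_{M\to\infty}\limsup_{n\to\infty}P\bigl(Mm_n\le|\hat{k}_W-k_0|\le\varepsilon n\bigr)=0.
\end{align*}
Assume without loss of generality that $h_n>0$; the argument behind Proposition~\ref{Prop:consistency} moreover makes $W_{k,n}>0$ on the above range with probability tending to one (since $\E[W_{k,n}]\asymp\alpha_n n^2$ dwarfs the $\mathcal{O}_P(nd_{n,r})$ global fluctuation under the assumption $h_n^{-1}=o(n/d_{n,r})$), so only the event $W_{k,n}\ge W_{k_0,n}$ for some $k$ with $|k-k_0|\ge Mm_n$ has to be ruled out.

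For $k\ge k_0$ split $W_{k,n}-W_{k_0,n}=A_1(k)-A_2(k)$ over the symmetric difference of index sets, with
\begin{align*}
A_1(k)&:=\sum_{i=k_0+1}^{k}\sum_{j=k+1}^{n}\bigl(1_{\{Y_i\le Y_j\}}-\tfrac12\bigr),\\
A_2(k)&:=\sum_{i=1}^{k_0}\sum_{j=k_0+1}^{k}\bigl(1_{\{Y_i\le Y_j+h_n\}}-\tfrac12\bigr).
\end{align*}
Exchangeability makes $A_1$ centred, whereas $A_2$ carries the entire drift: a direct computation yields $\E[W_{k_0,n}-W_{k,n}]\sim k_0(k-k_0)\alpha_n$ for $k\ge k_0$ and $(n-k_0)(k_0-k)\alpha_n$ for $k<k_0$, where $\alpha_n:=\int(F(y+h_n)-F(y))\,dF(y)$; under Assumption~\ref{ass:subordination} one has $\alpha_n=\alpha_h>0$ constant in the fixed-change case and $\alpha_n\sim h_n\int f^2>0$ in the local-change case. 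To bound the fluctuation, rewrite $A_1(k)=(n-k)\sum_{i=k_0+1}^{k}(\tilde F_{k+1,n}(Y_i)-\tfrac12)$ with $\tilde F_{k+1,n}$ the empirical c.d.f.\ of $Y_{k+1},\ldots,Y_n$, split the integrand into $[\tilde F_{k+1,n}(Y_i)-F(Y_i)]+[F(Y_i)-\tfrac12]$, and combine the Dehling--Taqqu uniform empirical-process rate $\sup_{k,x}|\tilde F_{k+1,n}(x)-F(x)|=\mathcal{O}_P(1/g_{D,r}(n))$ with the partial-sum invariance $\max_{\ell\le L}|\sum_{i=k_0+1}^{k_0+\ell}(F(Y_i)-\tfrac12)|=\mathcal{O}_P(d_{L,r})$ (the function $F\circ G-\tfrac12$ retains Hermite rank at least $r$ by direct integration of the rank-$r$ indicator class against $dF$). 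This yields, uniformly on $\{|k-k_0|\le L\}$ for any $L\le\varepsilon n$,
\begin{align*}
\max_{|k-k_0|\le L}\bigl|(W_{k,n}-W_{k_0,n})-\E[W_{k,n}-W_{k_0,n}]\bigr|=\mathcal{O}_P\bigl(Ld_{n,r}+nd_{L,r}\bigr)=\mathcal{O}_P(nd_{L,r});
\end{align*}
an analogous argument handles $A_2(k)$ after centring.

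Peel the event dyadically: on $S_j:=\{k:2^{j-1}Mm_n\le|k-k_0|<2^jMm_n\}$ the defining relation $g_{D,r}(m_n)\sim h_n^{-1}$ translates to $\alpha_n m_n\asymp m_n/g_{D,r}(m_n)\asymp d_{m_n,r}/c_r$, so the drift is at least $c\alpha_n n\,2^{j-1}Mm_n\asymp 2^{j-1}M\,n\,d_{m_n,r}$ while the fluctuation is $\mathcal{O}_P(n\,d_{2^jMm_n,r})$. Since $g_{D,r}$ is regularly varying with positive index $rD/2$, the drift-to-fluctuation ratio on $S_j$ satisfies
\begin{align*}
\frac{\text{drift}(S_j)}{\text{fluctuation}(S_j)}\,\asymp\,\frac{2^{j-1}M\,d_{m_n,r}}{d_{2^jMm_n,r}}\,\asymp\,\frac{g_{D,r}(2^jMm_n)}{g_{D,r}(m_n)}\,\asymp\,(2^jM)^{rD/2}\longrightarrow\infty
\end{align*}
as $M\to\infty$. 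Markov's inequality therefore gives $P(\hat{k}_W\in S_j)\lesssim(2^jM)^{-rD/2}$, and summing the geometric series over $j\ge 1$ produces $P(|\hat{k}_W-k_0|>Mm_n)\lesssim M^{-rD/2}\to 0$ uniformly in $n$, which is precisely $|\hat{k}_W-k_0|=\mathcal{O}_P(m_n)$.

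The principal obstacle is the local maximal inequality of the second paragraph: one needs a fluctuation bound scaling with the shell radius $L$ rather than with $n$, otherwise the shell drift of order $n\,2^jM\,d_{m_n,r}$ could not dominate the noise. The improvement hinges on recognising that, after the block-sum rewrite of $A_1$, the Hermite-rank-$r$ scaling $d_{L,r}$ arises naturally from the ``short'' partial sum of $F(Y_i)-\tfrac12$, while the ``long'' empirical c.d.f.\ contributes only the subdominant cross term $Ld_{n,r}$. Handling $A_2(k)$ is somewhat more delicate because the level shift actually enters there; in the local-change regime the bounded-density hypothesis underlies the linearisation $F(y+h_n)-F(y)\sim h_nf(y)$ needed to identify the leading drift $\alpha_n\sim h_n\int f^2$ that must be subtracted before the same empirical-process tools apply.
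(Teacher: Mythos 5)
Your strategy is sound and genuinely different in its mechanics from the paper's. The paper rules out $\sup_{k\in D_{n,M}}|W_n(k)|\ge|W_n(k_0)|$ by normalizing $W_n(k)-\widehat{W}_n(k)+\widehat{W}_n(k_0)-W_n(k_0)$ pointwise by $nd_{k_0-k,r}$, so that the drift grows like $M^{1-H}=M^{rD/2}$ while the four empirical-process remainders $A_{n,1},\dots,A_{n,4}$ stay uniformly $\mathcal{O}_P(1)$ (via the a.s.\ Dudley--Wichura/Skorohod representation of the Dehling--Taqqu theorem), after which a single application of Markov's inequality to $|Z_H^{(r)}(1)|\sup_x|J_r(x)|$ finishes the job. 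Your dyadic-shell argument discretizes the same drift-versus-noise comparison (your exponent $rD/2$ is exactly the paper's $1-H$), and your identification of the drift $k_0(k-k_0)\alpha_n$ and of the Hermite rank $\ge r$ of $F\circ G-\tfrac12$ are both correct.

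The one concrete gap is the step ``Markov's inequality therefore gives $P(\hat{k}_W\in S_j)\lesssim(2^jM)^{-rD/2}$.'' What you have established on each shell is only a stochastic-order bound, $\max_{|k-k_0|\le L}\bigl|W_{k,n}-W_{k_0,n}-\E[\cdots]\bigr|=\mathcal{O}_P(nd_{L,r})$, and an $\mathcal{O}_P$ statement only yields, for each $\delta$, a constant $K_\delta$ with the exceedance probability below $\delta$; it does not produce a bound that decays in $j$, so the geometric series over shells cannot be summed from it. To invoke Markov you need a moment bound uniform in $n$ and $L$, e.g.\ $\E\max_{|k-k_0|\le L}\bigl|W_{k,n}-W_{k_0,n}-\E[\cdots]\bigr|\le Cnd_{L,r}$, which in turn requires (i) an $L^1$ or $L^2$ maximal inequality for the window sums of the Hermite-rank-$\ge r$ functions $F(Y_i)-\tfrac12$ and $F(Y_j+h_n)-\E F(Y_j+h_n)$ (available from the variance asymptotics of rank-$r$ subordinated sums plus a M\'oricz-type maximal inequality), and (ii) a moment version of the uniform reduction principle behind $\sup_x|\tilde F_{k+1,n}(x)-F(x)|=\mathcal{O}_P(d_{n,r}/n)$. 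These are standard in the long-memory literature but are not consequences of the weak-convergence statements you cite, so they must be supplied before the shell probabilities can be summed. (The paper sidesteps this entirely: by placing $d_{k_0-k,r}$ inside the supremum it needs only tightness of the normalized remainders over all of $D_{n,M}(1)$ at once, plus finiteness of $\E|Z_H^{(r)}(1)|$.)
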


\begin{remark}\label{remark:convergence_rate}
\leavevmode
\begin{enumerate}
\item Under fixed changes $m_n$ is constant. As a consequence, $|\hat{k}_W-k_0|=\mathcal{O}_P(1)$. This result corresponds to the convergence rates obtained by \cite{HarizWylie2005} for the CUSUM-test based change point estimator and by \cite{LavielleMoulines2000} for the least-squares estimate of the change point location. Surprisingly, in this case the rate of convergence is independent of the intensity of dependence in the data characterized by the value of the LRD parameter $D$. An explanation for this phenomenon  might be the occurrence of two opposing effects: increasing values of the LRD parameter $D$ go along with a slower convergence of the test statistic $W_{k, n}$ (making estimation more difficult), but a more regular behavior of the random component (making estimation easier) (see \cite{HarizWylie2005}).
\item Note that if  $h_n^{-1}=o\left(\frac{n}{d_{n, r}}\right)$ and $m_n=g_{D, r}^{-}(h_n^{-1})$, it holds that
\begin{itemize}
\item $m_n\longrightarrow \infty$,
\item $\frac{m_n}{n}\longrightarrow 0$,
\item $ \frac{d_{m_n, r}}{m_n} \sim h_n$,
\end{itemize}
as $n\longrightarrow \infty$.
\end{enumerate}
\end{remark}

Based on the previous results it is possible to derive the asymptotic distribution of the change point estimator $\hat{k}_W$:

\begin{theorem}\label{thm:asymp_distr}
Suppose that Assumption \ref{ass:subordination} holds  with $r=1$ and  assume that $F$ has a bounded
density $f$. Let $m_n:=g_{D, 1}^{-}(h_n^{-1})$, let $B_H$ denote a fractional Brownian motion process  and
define $h(s; \tau)$ by
\begin{align*}
h(s; \tau)=
\begin{cases}
s(1-\tau)\int f^2(x)dx  &\text{if $s\leq 0$}\\
-s\tau \int f^2(x)dx  &\text{if $s> 0$}
\end{cases}.
\end{align*}. If
$h_n^{-1}=o\left(\frac{n}{d_{n, 1}}\right)$,
then, for all $M>0$,
\begin{align*}
\frac{1}{e_n}\left(W_{k_0+\lfloor m_n s\rfloor, n}^2-W_{k_0, n}^2\right), \ -M\leq s\leq M,
\end{align*}
with  $e_n=n^3h_nd_{m_n, 1}$, converges in distribution to
\begin{align*}
2\tau(1-\tau)\int f^2(x)dx\left(\sign(s)B_H(s)\int J_1(x)dF(x)+h(s; \tau)\right), \ -M\leq s\leq M,
\end{align*}
in the Skorohod space $D\left[-M, M\right]$. 
Furthermore, it follows that
$m_n^{-1}(\hat{k}_W-k_0)$
converges in distribution to
\begin{align}\label{eq:limit_cpe}
\argmax_{-\infty < s <\infty}\left(\sign(s)B_H(s)\int J_1(x)dF(x)+h(s; \tau)\right).
\end{align}
\end{theorem}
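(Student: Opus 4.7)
The plan is to combine the localization from Theorem~\ref{convergence rate} with a fine decomposition of $W_{k_0+j, n}^2 - W_{k_0, n}^2$ on the $m_n$-scale, and then apply an argmax continuous mapping theorem. By Theorem~\ref{convergence rate}, $|\hat k_W - k_0| = \mathcal{O}_P(m_n)$, so with probability $\geq 1 - \epsilon$ we may restrict the search to $k$ with $|k - k_0| \leq M m_n$ for a large $M$; thus it suffices to show that the rescaled process $Q_n(s) := e_n^{-1}(W_{k_0+\lfloor m_n s\rfloor, n}^2 - W_{k_0, n}^2)$ converges in $D[-M, M]$ to the stated limit $Q(s)$, for every $M$, together with tightness of the pre-argmax as $M \to \infty$.

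To analyze $Q_n$, I would factor $W_{k_0+j,n}^2 - W_{k_0,n}^2 = (W_{k_0+j,n} - W_{k_0,n})(W_{k_0+j,n} + W_{k_0,n})$. Under $h_n^{-1} = o(n/d_{n,1})$, both $W_{k_0,n}$ and $W_{k_0+j,n}$ (uniformly in $|j| \leq M m_n$) are dominated by their mean $\tau(1-\tau) n^2 h_n \int f^2$, so the sum factor equals $2\tau(1-\tau) n^2 h_n \int f^2 \cdot (1 + o_P(1))$. For the difference factor with $j > 0$, split
\begin{align*}
W_{k_0+j,n} - W_{k_0,n} = -\sum_{i=1}^{k_0}\sum_{l=k_0+1}^{k_0+j}(1_{\{X_i \leq X_l\}} - \tfrac{1}{2}) + \sum_{i=k_0+1}^{k_0+j}\sum_{l=k_0+j+1}^{n}(1_{\{X_i \leq X_l\}} - \tfrac{1}{2})
\end{align*}
and apply a Hoeffding/H\'ajek projection to each two-sample U-statistic. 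The mixed sum has mean $\theta = 1/2 + h_n \int f^2 + O(h_n^2)$ by Taylor expansion of $F$ (using $f$ bounded), yielding the deterministic drift $-k_0 j h_n \int f^2$, and linear projections with leading kernels $1/2 - F(Y_\bullet)$; the purely post-change second sum contributes an oppositely-signed linear projection over the same block $\{k_0+1,\ldots,k_0+j\}$. Combining the overlapping terms on this block yields
\begin{align*}
W_{k_0+j,n} - W_{k_0,n} = -k_0 j h_n \int f^2 - (n - j) \sum_{l=k_0+1}^{k_0+j}(F(Y_l) - \tfrac{1}{2}) + R_n(j),
\end{align*}
where $R_n(j)$ collects the degenerate bilinear parts of both Hoeffding decompositions, the $O(h_n)$-corrections in the linear kernels, and the off-block linear projections over $\{1,\ldots,k_0\}$ and $\{k_0+j+1,\ldots,n\}$, the latter being of order $m_n d_{n,1} = (m_n/n)^{D/2} n d_{m_n, 1} = o(n d_{m_n, 1})$. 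A parallel computation handles $j < 0$, with drift $j(1-\tau) n h_n \int f^2$ and the block sum taken over $\{k_0+\lfloor m_n s\rfloor+1,\ldots,k_0\}$.

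Substituting back into $Q_n(s)$ and using the identity $g_{D,1}(m_n) = h_n^{-1}$ (which gives $m_n h_n \sim d_{m_n, 1}$), the deterministic part produces $2\tau(1-\tau)\int f^2 \cdot h(s; \tau)$. For the stochastic part, the Hermite expansion $1/2 - F(Y_l) = H_1(\xi_l) \int J_1(x)\, dF(x) + \textrm{higher rank}$, combined with Taqqu's non-central limit theorem, gives
\begin{align*}
d_{m_n, 1}^{-1} \sum_{l=k_0+1}^{k_0+\lfloor m_n s\rfloor}\bigl(\tfrac{1}{2} - F(Y_l)\bigr) \xrightarrow{D} B_H(s) \int J_1(x)\, dF(x) \quad \textrm{in } D[0, M],
\end{align*}
and the symmetric $s < 0$ case produces the $\sign(s) B_H(s)$ structure. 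This establishes Skorohod convergence $Q_n \Rightarrow Q$ on $[-M, M]$. Finally, since $Q(s)$ has concave drift $\sim -|s|$ dominating the $|s|^H$-fluctuations ($H = 1 - D/2 < 1$), it has an a.s.\ unique global maximizer; together with tightness of the pre-argmax, the argmax continuous mapping theorem (\textit{cf.}~Kim--Pollard) yields $m_n^{-1}(\hat k_W - k_0) \xrightarrow{D} \argmax Q$. The main technical obstacle will be the uniform-in-$j$ control of $R_n(j)$ for $|j| \leq M m_n$: specifically, sharp $L^2$-bounds for partial sums of Hermite-rank-$\geq 2$ functionals of $(\xi_l)$, ensuring the degenerate bilinear parts are $o_P(n d_{m_n, 1})$ uniformly; a secondary difficulty is upgrading finite-dimensional convergence to tightness in $D[-M, M]$, which requires oscillation estimates for the associated two-parameter sequential empirical process.
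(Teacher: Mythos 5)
Your proposal follows essentially the same route as the paper: the factorization $W^2_{k_0+j,n}-W^2_{k_0,n}=(W_{k_0+j,n}-W_{k_0,n})(W_{k_0+j,n}+W_{k_0,n})$, the observation that the sum factor is $2\tau(1-\tau)n^2h_n\int f^2\,(1+o_P(1))$, the split of the difference factor into a deterministic drift of order $n m_n h_n\sim n d_{m_n,1}$ plus a block sum over $\{k_0+1,\dots,k_0+\lfloor m_ns\rfloor\}$ converging at scale $d_{m_n,1}$ to $B_H(s)\int J_1\,dF$, and the conclusion via localization from Theorem~\ref{convergence rate} together with an argmax continuous mapping theorem. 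The one substantive difference is the technical vehicle: you phrase the decomposition in U-statistic/H\'ajek-projection language and defer the uniform-in-$j$ control of the degenerate bilinear remainder $R_n(j)$ to as-yet-unproven $L^2$ bounds for Hermite-rank-$\geq 2$ partial sums, whereas the paper rewrites every term as an integral against sequential empirical distribution functions and obtains all the required uniformity (in $x$ and in the time index simultaneously) in one stroke from the Dehling--Taqqu empirical process non-central limit theorem, upgraded to almost-sure uniform convergence via the Dudley--Wichura/Skorohod representation; this also delivers tightness in $D[-M,M]$ without separate oscillation estimates, which you flag as a secondary difficulty. Your route would work --- the reduction principle you invoke is exactly what underlies the Dehling--Taqqu theorem --- but the empirical-process formulation is what lets the paper dispose of the remainders (your off-block projections correspond to the paper's terms $A_{2,n},A_{3,n}$, killed by $m_n/d_{m_n,1}=o(n/d_{n,1})$) with no new moment computations. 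One small point to tighten: dominance of the linear drift over the $|s|^H$ fluctuations gives that the limit process tends to $-\infty$ and hence attains its maximum on a compact, but uniqueness of the maximizer needs a separate argument (the paper cites Lifshits' unimodality criterion); your appeal to Kim--Pollard covers this, but it should be stated as such rather than derived from the drift domination.
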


\begin{remark}
\leavevmode
\begin{enumerate}
\item Under local changes the assumption on $h_n$
is equivalent to Assumption C.5 (i) in \cite{HorvathKokoszka1997}. Moreover, the limit distribution \eqref{eq:limit_cpe} closely resembles the limit distribution of the CUSUM-based change point estimator considered in that paper.
\item
The proof of Theorem \ref{thm:asymp_distr} is mainly based on 
the
 empirical process non-central limit theorem for subordinated Gaussian sequences
in \cite{DehlingTaqqu1989}. The sequential empirical process has also been studied by many other authors in the context of different models. See, among
many others, the following:
\cite{Muller1970} and \cite{Kiefer1972} for independent and identically distributed data,
\cite{BerkesPhillip1977} and
\cite{PhilippPinzur1980} for strongly mixing processes,
\cite{BerkesHoermannSchauer2009} for S-mixing processes, \cite{GiraitisSurgailis1999} for long memory linear (or moving average) processes,  \cite{DehlingDurieuTusche2014} for multiple mixing processes.
Presumably, in these situations
the asymptotic distribution of $\hat{k}_W$ can be derived by the same argument as in the proof of Theorem  \ref{thm:asymp_distr} for subordinated Gaussian processes.
In particular,   Theorem 1 in \cite{GiraitisSurgailis1999} can be considered as a generalization of Theorem 1.1 in \cite{DehlingTaqqu1989}, i.e.  with an appropriate normalization the change point estimator $\hat{k}_W$, computed with respect to
 long-range dependent  linear processes as defined in \cite{GiraitisSurgailis1999}, should converge in distribution to a limit that  corresponds to \eqref{eq:limit_cpe} (up to multiplicative constants).
\end{enumerate}
\end{remark}

\section{Applications}\label{Applications}

We consider two well-known data sets which have been analyzed before. 
We compute the estimator $\hat{k}_W$ based on the given observations and put our results into context with the findings and conclusions of other authors.

\begin{figure}[ht]
\includegraphics[scale=0.4]{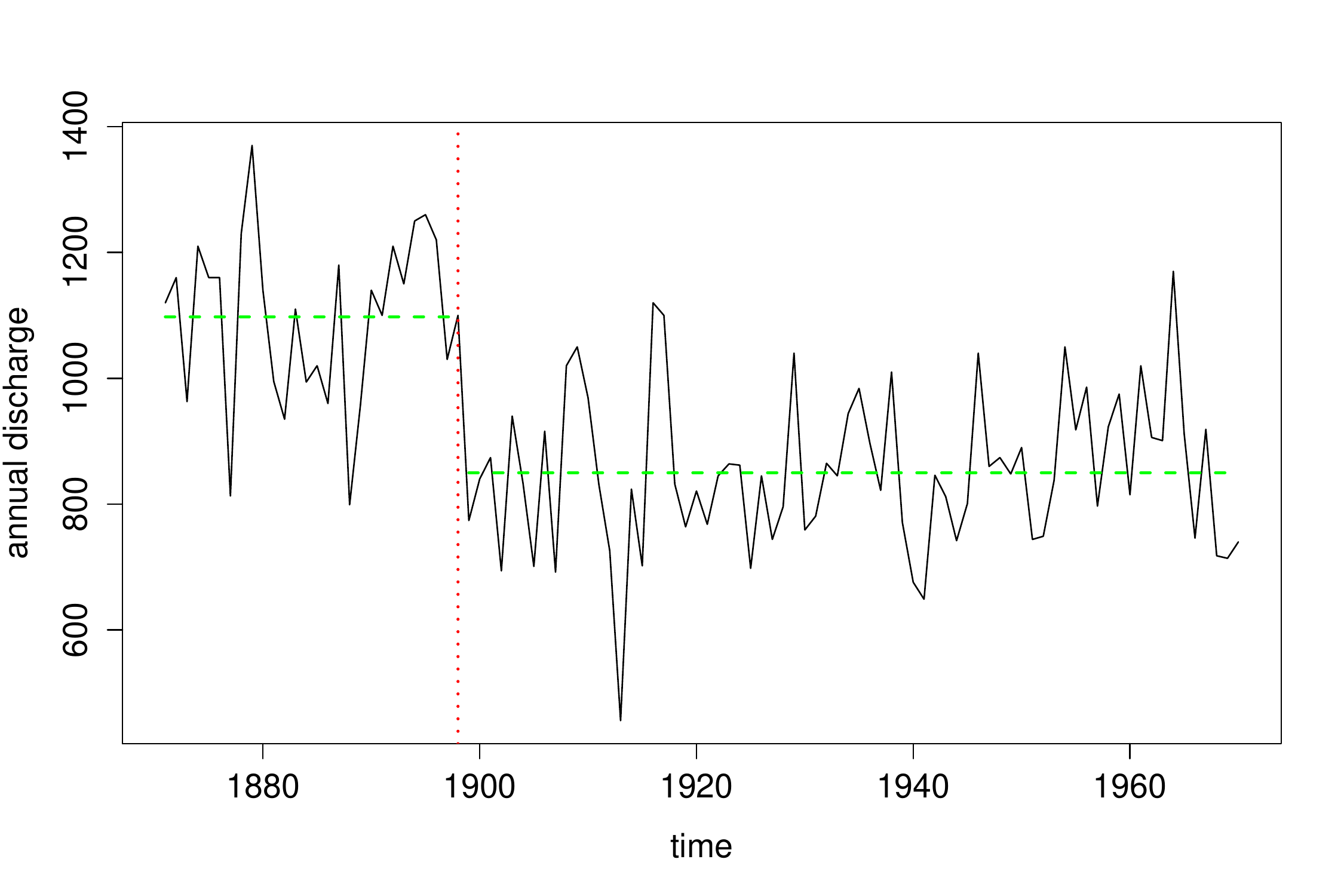}
\caption{Measurements of the annual discharge of the river Nile at  Aswan in $10^8 m^3$ for the years 1871-1970.  The dotted line indicates the potential change point estimated by $\hat{k}_{\text{W}}$; the dashed lines designate the sample means for the pre-break and post-break samples.}
\label{Nile}
\end{figure}

The plot in Figure \ref{Nile} depicts 
 the annual volume of
discharge from the Nile river at Aswan in $10^8 m^3$ for the years 1871 to 1970. The data set is included in any standard distribution of \lstinline$R$.
Amongst others, \cite{Cobb1978}, \cite{MacNeill1991},  \cite{WuZhao2007},  \cite{Shao2011} and \cite{BetkenWendler2016} provide statistically significant evidence for a decrease of the Nile's annual discharge 	towards the end of the 19th century. 

The  construction of the Aswan Low Dam between 1898 and 1902  serves as a popular explanation for an abrupt change in the data around the turn of the century.
Yet, Cobb gave another explanation for the decrease in water volume by citing rainfall records which suggest a decline of tropical rainfall at that time.
In fact, an application of the change point estimator $\hat{k}_W$ identifies a change in 1898. This result seems to be in good accordance with the estimated change point locations suggested by  other authors:
Cobb's analysis of the Nile data  leads to the conjecture of a significant decrease in discharge volume in 1898. Moreover, computation of the CUSUM-based change point estimator $\hat{k}_{C, 0}$ considered in  \cite{HorvathKokoszka1997} indicates a change in 1898.
 \cite{Balke1993} and \cite{WuZhao2007} 
suggest that the change occurred in 1899.

\begin{figure}[ht]
\includegraphics[scale=0.4]{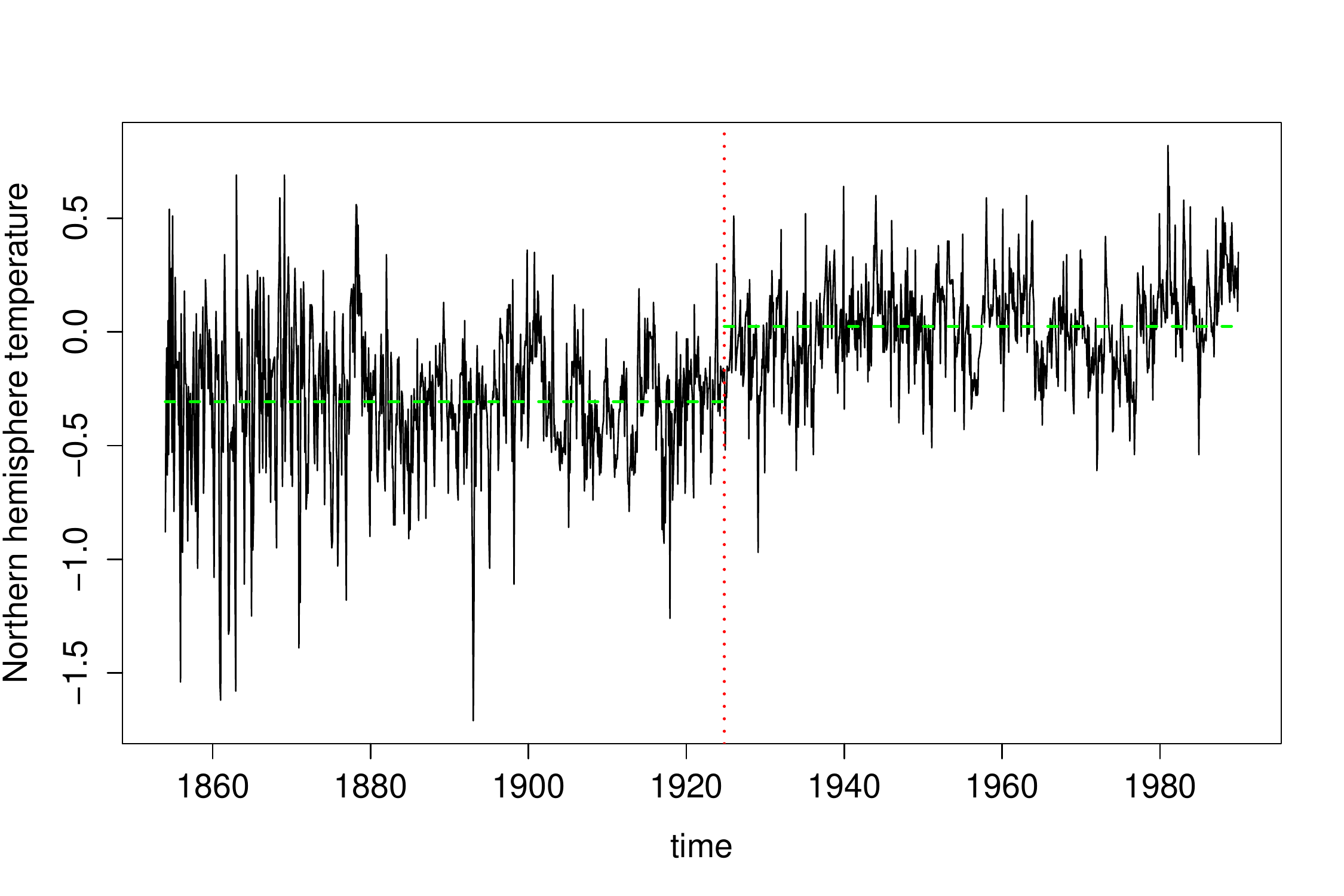}
\caption{Monthly temperature of the Northern hemisphere for the years 1854-1989 from the data base held at the Climate Research Unit of the University of East Anglia, Norwich, England. The temperature anomalies (in degrees C) are calculated with respect to the reference period 1950-1979.  The dotted line indicates the location of the potential change point; the dashed lines designate the sample means for the pre-break and post-break samples.}
\label{NemiTemp}
\end{figure}

The second data set consists of the seasonally adjusted monthly deviations of the  temperature (degrees C) for the Northern hemisphere during the years 1854 to 1989 from the monthly averages over the period 1950 to 1979. The data has been taken from  the \lstinline$longmemo$ package in \lstinline$R$. It results from spatial averaging of temperatures measured over land and sea.
In view of the plot in Figure \ref{NemiTemp} it seems natural to assume that the data generating process is non-stationary.
Previous analysis of this data offers different explanations for the  	irregular behavior of the time series.   \cite{DeoHurvich1998}  fitted a linear trend to the data, thereby providing statistical evidence 
for global warming during the last decades.
However, the consideration of a more general stochastic model by the assumption of so-called semiparametric fractional autoregressive  (SEMIFAR) processes in \cite{BeranFeng2002} does not confirm the conjecture of a trend-like behavior. Neither does the investigation of the global temperature data  in   \cite{Wang2007} support the hypothesis of an increasing trend.
It is pointed out by Wang that the trend-like behavior of the Northern hemisphere temperature data may have been generated by  stationary long-range dependent processes. 
Yet, it is shown in  \cite{Shao2011} and also in \cite{BetkenWendler2016} that  under model assumptions that include long-range dependence an application of change point tests leads to a rejection of the hypothesis that the time series is stationary.
According to  \cite{Shao2011} an estimation  based on a self-normalized CUSUM test statistic suggests a change around October 1924. Computation of the change point estimator $\hat{k}_W$ corresponds to a  change point located around June 1924. The same change point location results from an application of the previously mentioned estimator $\hat{k}_{C, 0}$ considered in  \cite{HorvathKokoszka1997}. In this regard estimation by  $\hat{k}_W$ seems to be in good accordance with the results of alternative change point estimators.

\section{Simulations}\label{Simulations}

We will now investigate the finite sample performance of the change point estimator $\hat{k}_W$ and compare it to corresponding simulation results for the estimators $\hat{k}_{SW}$ (based on the self-normalized Wilcoxon test statistic) and $\hat{k}_{\text{C}, 0}$  (based on the CUSUM test statistic with parameter $\gamma = 0$) .
For this purpose, we consider two different scenarios:
\begin{enumerate}
\item
Normal margins: We generate  fractional Gaussian noise time series $(\xi_i)_{i\geq 1}$ and choose $G(t) = t$ in Assumption \ref{ass:subordination}.
As a result, the simulated observations $\left(Y_i\right)_{i\geq 1}$ are Gaussian with autocovariance function $\rho$ satisfying
\begin{align*}
\rho(k)\sim \left(1-\frac{D}{2}\right)\left(1-D\right)k^{-D}.
\end{align*}
Note that  in this case the Hermite coefficient $J_1(x)$ is not equal to $0$ for all $x\in \mathbb{R}$  (see \cite{DehlingRoochTaqqu2013a}) so that $m = 1$, where $m$ denotes the Hermite rank of $1_{\left\{G(\xi_i)\leq x\right\}}-F(x), x\in \mathbb{R}$.  
Therefore, Assumption \ref{ass:subordination} holds for all values of $D\in\left(0, 1\right)$.
\item Pareto margins:
In order  to get standardized Pareto-distributed data which has a representation as a functional of a Gaussian process, we consider the transformation 
\begin{align*}
G(t)=\left(\frac{\beta k^2}{(\beta -1)^2(\beta-2)}\right)^{-\frac{1}{2}}\left(k(\Phi(t))^{-\frac{1}{\beta}} -\frac{\beta k}{\beta -1}\right) 
\end{align*}
with parameters $k, \beta>0$ and with $\Phi$ denoting the standard normal distribution function.
Since $G$ is a strictly decreasing function, it follows  by Theorem 2 in \cite{DehlingRoochTaqqu2013a}  that 
the Hermite rank of 
$1_{\left\{G(\xi_i)\leq x\right\}}-F(x), x\in \mathbb{R}$,
is $m = 1$ so that Assumption \ref{ass:subordination} holds for all values of $D\in\left(0, 1\right)$.
\end{enumerate}
 To analyze the behavior of the estimators we simulated $500$ time series of length $600$ and added a level shift of height $h$ 
after a proportion $\tau$ of the data. We have done so for several choices of  $h$ and $\tau$.
The descriptive statistics, i.e. mean, sample standard deviation (S.D.) and quartiles, are reported in Tables \ref{sampling_distribution_Wilcoxon}, \ref{sampling_distribution_SN_Wilcoxon}, and \ref{sampling_distribution_CUSUM}  for the three change point estimators $\hat{k}_W$, $\hat{k}_{SW}$ and $\hat{k}_{C, 0}$. 

The following observations, made on the basis of Tables \ref{sampling_distribution_Wilcoxon}, \ref{sampling_distribution_SN_Wilcoxon}, and \ref{sampling_distribution_CUSUM}, correspond  to the expected behavior of consistent change point estimators: 
\begin{itemize}
\item Bias and variance of the estimated change point location  decrease when  the height  of the level shift increases. 
\item Estimation of the time of change is more accurate  for  breakpoints located in the middle of the sample than estimation of change point locations that lie close   to the boundary of the testing region. 
\item High values of $H$ go along with an increase of bias and variance. This seems natural since when there is very strong dependence, i.e. $H$
is large, the variance of the series  increases, so that it becomes  harder to accurately estimate the location of a level shift.
\end{itemize}
A comparison of the descriptive statistics of the estimator $\hat{k}_W$
(based on the Wilcoxon statistic)  and $\hat{k}_{SW}$ (based on the  self-normalized Wilcoxon statistic)
shows that:
\begin{itemize}
\item In most cases the estimator $\hat{k}_{SW}$ has a smaller bias, especially for an early change point location. Nevertheless,  the difference between the biases of  $\hat{k}_{SW}$ and $\hat{k}_W$  is not   big.
\item In general the sample standard deviation of $\hat{k}_W$ is smaller than that of $\hat{k}_{SW}$. Indeed, it is only slightly better for $\tau=0.25$, but there is a clear difference  for $\tau=0.5$.   
\end{itemize}

All in all, our simulations do not give rise to choosing $\hat{k}_{SW}$ over $\hat{k}_W$. In particular,   better standard deviations of $\hat{k}_W$ compensate  for  smaller biases of $\hat{k}_{SW}$.

Comparing the finite sample performance of $\hat{k}_W$ and the CUSUM-based change point estimator $\hat{k}_{C, 0}$ we make the following observations:
\begin{itemize}
\item For fractional Gaussian noise time series bias and variance of $\hat{k}_{C, 0}$ tend to be slightly better, at least when $\tau=0.25$ and  especially for relatively high level shifts. Nonetheless, the deviations are in most cases negligible.
\item If the change happens in the middle of a sample with normal margins,  bias and variance of  $\hat{k}_W$ tend to be smaller, especially for relatively high level shifts. Again, in most cases the deviations are negligible.
\item For Pareto($3$, $1$) time series $\hat{k}_W$ clearly outperforms $\hat{k}_{C, 0}$ by yielding smaller biases and decisively smaller variances for almost every combination of parameters that has been considered. 
  The performance of the estimator $\hat{k}_{C, 0}$ surpasses the performance of $\hat{k}_W$  only for high values of the jump height $h$. 
\end{itemize}
It is well-known that the  Wilcoxon change point test is more robust against outliers in  data sets than the CUSUM-like  change point  tests, i.e. the Wilcoxon test  outperforms CUSUM-like tests if heavy-tailed time  series are considered.
Our simulations confirm that this observation is also reflected by the finite sample behavior of the corresponding change point estimators.

\begin{figure}[ht]
\includegraphics[scale=0.4]{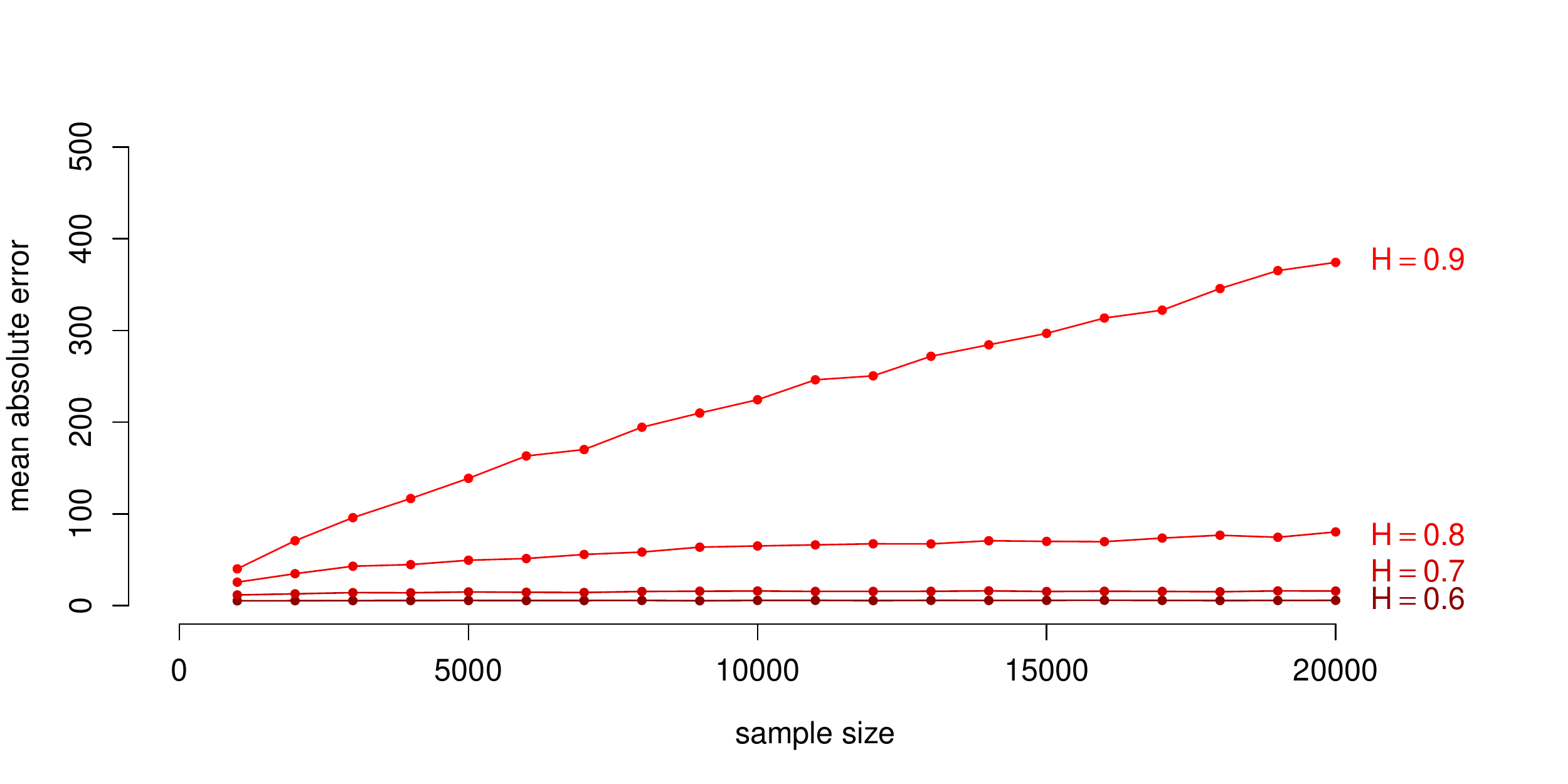}
\caption{The MAE of $\hat{k}_W$ for different values of $H$.}
\label{mean_absolute_error}
\end{figure}

As noted in Remark \ref{remark:convergence_rate}, $\hat{k}_W-k_0=\mathcal{O}_P(1)$
under the assumption of a constant change point height $h$.
This observation is illustrated by simulations of the mean absolute error
\begin{align*}
\text{MAE}=\frac{1}{m}\sum\limits_{i=1}^{m}\left|\hat{k}_{W, i}-k_0\right|,
\end{align*}
where $\hat{k}_{W, i}$, $i=1, \ldots, m$, denote the estimates  for $k_0$, computed on the basis of $m=5000$ different sequences of fractional Gaussian noise time series. 

Figure \ref{mean_absolute_error} depicts a plot of $\text{MAE}$ against the sample size $n$ with $n$ varying between $1000$ and $20000$.

Since $\hat{k}_W-k_0=\mathcal{O}_P(1)$ due to Theorem \ref{convergence rate}, we expect $\text{MAE}$ to approach a constant as $n$ tends to infinity. This can be clearly seen in Figure  \ref{mean_absolute_error} for $H\in\left\{0.6, 0.7, 0.8\right\}$.
For a high intensity of dependence in the data (characterized by $H=0.9$) convergence  becomes slower. This is due  to a slower convergence of the test statistic $W_{n}(k)$ which, in finite samples, is not canceled out by the effect  of a more regular behavior of the sample paths of the limit process.

\begin{landscape}
\begin{table}
\caption{Descriptive statistics of the sampling distribution of $\hat{k}_W$ for a change in the mean based on $500$ fractional Gaussian noise and Pareto
time series of length $600$ with Hurst parameter $H$ and a change in mean in $\tau$ of height $h$. }
\begin{tabular}{llllrrrr}
margins & $\tau$ & $h$ &  & $H= 0.6$ & $H = 0.7$ & $H = 0.8$ & $H = 0.9$ \\ 
\hline
normal & $0.25$ & $0.5$ & mean (S.D.)& {193.840} (64.020) & {227.590} (99.788) & {252.408} (110.084)& {270.646} (113.720)\\  
& &  & quartiles & (150, 168, 217.25)& (150, 191,  284.25)& (157, 226.5, 335.25)&  (172.75, 250, 353) 
\\ 
& & $1$ & mean (S.D.) & 164.244 (27.156)& 176.362 (42.059)&{188.328} (63.751)&{215.108} (88.621)\\  
&&  & quartiles  & (150, 153.5, 167)&  (150,  158,  190)& (150,  159.5, 206.25)&  (150  176  256)  \\ 
& & $2$ & mean (S.D.) &{153.604} (8.255) &{156.656} (12.393)&{164.338} (29.570)&{173.610} (41.514)\\ 
& &  & quartiles &  (150,  151,  154)&  (150,  151,  158)&  (150,  151,  164)& (150, 152, 180.25)\\ 
 &  &  &  &  &  &  \\ 
& $0.5$ & $0.5$ & mean (S.D.)&{299.506} (30.586) &{301.870} (61.392)&{300.774} (82.610)&{298.930} (98.368)\\ 
& &  & quartiles   &  (291,  300,  309)& (274.75, 300.5, 320.25)& (264, 299, 339.25)&  (233,  299,  353)\\ 
 & & $1$ & mean (S.D.) &{300.014} (9.141)&{300.438} (18.695)&{302.592} (42.213)&{300.902} (50.487)\\ 
& &  & quartiles   & (298,  300,  302)&  (297,  300,  304)&  (293,  300  307)&  (290,  300,  311)\\ 
& & $2$ & mean (S.D.) &{300.064} (1.294) &{299.922} (3.215)&{299.504} (5.520)&{300.282} (7.494)\\  
& &  & quartiles   & (300,  300,  300)&  (300,  300,  300)&  (300,  300,  300)&  (300,  300,  300)\\ 
 &  &  &  &  &  &  \\ 
  &  &  &  &  &  &  \\ 
Pareto$(3,1)$ & $0.25$ & $0.5$ & mean (S.D.)&{158.166} (17.762)&{164.080} (31.219)&{179.512} (58.871)&{194.126} (74.767)\\ 
& &  & quartiles & (150, 151, 159.25)& (150, 152, 168)& (150, 154, 191.25)& (150, 159, 218.25)\\  
& & $1$ & mean (S.D.) &{154.160} (8.765)&{156.090} (13.516)&{164.712} (28.774)&{178.174} (54.429)\\  
& &  & quartiles & (150, 151, 155)& (150, 151, 157)& (150, 152, 168)& (150, 152, 186)\\ 
& & $2$ & mean (S.D.) &{152.256} (4.852)&{155.592}  (11.092)&{160.686} (24.599)&{169.374} (38.197)\\ 
& &  & quartiles & (150, 150, 152)& (150, 151, 155.25)& (150, 151, 159)& (150, 150, 172)\\ 
 &  &  &  &  &  &  \\ 
& $0.5$ & $0.5$ & mean (S.D.)&{298.072} (6.008)&{296.432} (13.441)&{293.060} (26.221)&{289.946} (45.739)\\ 
 & &  & quartiles & (297, 300, 300)& (296, 300, 300)& (294, 300, 301)& (291, 300, 301)\\ 
& & $1$ & mean (S.D.)&{299.178} (2.712)&{298.744} (4.587)&{296.674} (11.585)&{296.168} (20.424)\\ 
& &  & quartiles  & (299, 300, 300)& (299, 300, 300)& (298, 300, 300)& (300, 300, 300)\\  
& & $2$ & mean (S.D.)& 299.798 (1.008)& 299.716 (1.543)& 299.384 (3.070)&298.896 (6.560)\\ 
& &  & quartiles & (300, 300, 300)& (300, 300, 300)& (300, 300, 300)& (300, 300, 300) 
\end{tabular}
\label{sampling_distribution_Wilcoxon}
\end{table}
\end{landscape}

\begin{landscape}	
\begin{table}
\caption{Descriptive statistics of the sampling distribution of $\hat{k}_{SW}$ for a change in the mean based on $500$ replications of fractional Gaussian noise and Pareto time series of length $600$ with Hurst parameter $H$ and a change in mean in $\tau$ of height $h$. }
\begin{tabular}{l l l l r r r r}
 margins & $\tau$ & $h$ &  & $H= 0.6$ & $H = 0.7$ & $H = 0.8$ & $H = 0.9$ \\ 
\hline
normal & $0.25$ & $0.5$ & mean (S.D.) &{172.288} (63.639)&{216.934} (110.934) &{242.202} (119.655) &{268.878} (122.615)\\
& &  & quartiles  & (135, 153, 183.25) & (138, 171, 272.5) & (143, 207.5, 333.5) & (157, 243.5, 370.25) \\
& & $1$ & mean (S.D.)&{152.406} (24.840) &{160.618} (39.834)&{174.424} (70.673) &{204.906} (99.648)\\ 
& &  & quartiles & (140, 149, 158) & (139, 150.5, 172.25) & (136, 150, 188.25) & (139.75, 161.5, 243.75) \\ 
 & & $2$ & mean (S.D.)&{148.836} (9.007)&{150.208} (13.575) &{153.194}  (28.251)&{160.026} (40.979)\\ 
& &  & quartiles & (144, 150, 152) & (142.75, 150, 154) & (138, 150, 158) & (137.75, 150, 165) \\  
 &  &  &  &  &  &  \\ 
& $0.5$ & $0.5$ & mean (S.D.) &{297.712} (43.291) &{302.204} (77.719)&{302.866} (96.511)&{297.662} (110.175)\\ 
& &  & quartiles  & (277, 297, 320) & (262, 300, 337) & (248, 298.5, 369.5) & (215, 301, 369.5) \\ 
& & $1$ & mean (S.D.)&{299.052} (16.132) &{299.910} (28.907)&{302.386} (55.267)&{300.956} (62.821)\\
& &  & quartiles& (290, 299, 308) & (288, 300, 313) & (277, 300, 324.25) & (270, 300, 329) \\ 
& & $2$ & mean (S.D.)&{300.010} (6.054)&{299.612} (10.079)&{298.844} (14.059)&{301.424} (21.022)\\ 
 & &  & quartiles  & (297, 300, 303.25) & (294, 300, 305) & (291, 300, 307) & (289, 300, 312) \\ 
  &  &  &  &  &  &  \\ 
   &  &  &  &  &  &  \\ 
Pareto$(3, 1)$ & $0.25$ & $0.5$ & mean (S.D.) &{151.562} (18.392) &{155.034} (32.505)&{165.260} (58.363)&{182.706} (83.268)\\ 
& &  & quartiles & (142, 150, 157) & (140, 150, 163) & (136, 150, 173) & (136.75, 150, 196.25) \\  
& & $1$ & mean (S.D.)&{150.206} (9.116)&{150.272} (15.405)&{152.824} (25.074)&{166.602} (58.982)\\ 
& &  & quartiles & (145, 150, 154) & (143, 150, 156) & (140, 150, 159.25) & (136, 150, 174.25) \\ 
& & $2$ & mean (S.D.)&{149.210} (6.201)&{149.934} (11.821)&{151.946} (21.426)&{156.836} (39.311)\\ 
 & &  & quartiles  & (146, 150, 152) & (143, 150, 153) & (140, 150, 156) & (136, 150, 160.25) \\ 
 &  &  &  &  &  &  \\ 
& $0.5$ & $0.5$ & mean (S.D.)&{300.524} (11.841) &{299.488} (21.317)&{299.664} (37.136)&{295.048} (55.000)\\ 
& &  & quartiles & (294, 300, 307) & (290, 300, 310) & (287, 300, 317) & (280.75, 300, 318) \\ 
& & $1$ & mean (S.D.)&{300.498} (6.600)&{300.560} (10.383)&{299.520} (18.862)&{297.766} (28.308)\\ 
& &  & quartiles  & (297, 300, 304) & (296, 300, 306) & (292, 300, 309.25) & (289, 300, 312.25) \\ 
& & $2$ & mean (S.D.)&{300.444} (4.411)&{300.234} (7.517)&{300.524} (11.122)&{298.840} (16.004)\\ 
& &  & quartiles  & (298, 300, 303) & (296, 300, 304) & (295.75, 300, 307) & (292, 300, 308) \\ 
\end{tabular}
\label{sampling_distribution_SN_Wilcoxon}
\end{table}
\end{landscape}

\begin{landscape}
\begin{table}
\caption{Descriptive statistics of the sampling distribution of $\hat{k}_{\text{C}, 0}$ for a change in the mean based on $500$ replications of fractional Gaussian noise and Pareto time series of length $600$ with Hurst parameter $H$ and a change in mean in $\tau$ of height $h$. }
\begin{tabular}{l l l l r r r r}
margins & $\tau$ & $h$ &  & $H= 0.6$ & $H = 0.7$ & $H = 0.8$ & $H = 0.9$ \\ 
\hline
normal & $0.25$ & $0.5$ & mean (S.D.)&{193.060} (64.917)& {228.948} (101.442)&{253.114} (111.182)&{271.380} (114.590)\\ 
& &  & quartiles& (150, 166.5, 222) & (151, 191.5, 286.75) & (156.75, 226, 341.5) & (172.75, 249.5, 354.25) \\ 
& & $1$ & mean (S.D.)&{162.028}  (22.948)&{173.838} (39.845)&{187.386} (63.865) &{213.114} (87.356)\\ 
& &  & quartiles & (150, 153, 164) & (150, 156.5, 187.25) & (150, 158, 206) & (150, 173, 254.25) \\ 
& & $2$ & mean (S.D.)&{152.374} (6.249) &{154.878} (10.395)&{159.700} (22.064) &{165.940} (33.124)\\ 
& &  & quartiles& (150, 150, 152) & (150, 150, 156) & (150, 151, 158) & (150, 150, 165) \\ 
 &  &  &  &  &  &  \\ 
 & $0.5$ & $0.5$ & mean (S.D.) &{297.840} (30.249)&{302.060} (63.878)&{300.246} (84.346)&{298.910} (97.904)\\ 
& &  & quartiles & (290, 299, 308) & (276, 301, 322) & (261.75, 300, 340) & (236.25, 299, 353.25) \\ 
& & $1$ & mean(S.D.) &{299.870} (9.356)&{299.662} (21.281)&{303.646} (42.245)&{299.762} (52.492)\\ 
& &  & quartiles & (298, 300, 302) & (297, 300, 304) & (293, 300, 307) & (290, 300, 311) \\ 
& & $2$ & mean (S.D.)&{300.060} (1.473)&{299.916} (3.199)&{299.442} (5.234)&{300.460} (8.179)\\ 
& &  & quartiles  & (300, 300, 300) & (300, 300, 300) & (300, 300, 300) & (300, 300, 300)\\ 
 &  &  &  &  &  &  \\ 
  &  &  &  &  &  &  \\ 
Pareto($3$, $1$) & $0.25$ & $0.5$ & mean (S.D.) &{175.632} (48.517)&{198.452} (79.303)&{205.506} (88.482)&{210.444}(93.831) \\ 
& &  & quartiles & (150, 159, 185) & (150, 168, 223.75) & (150, 173, 251.25) & (150, 167, 259.5) \\ 
& & $1$ & mean (S.D.)&{156.586} (14.133)&{160.350} (27.204)&{170.278} (45.402)&{177.278} (66.661)\\ 
& &  & quartiles & (150, 152, 159) & (150, 152, 161) & (150, 153, 171) & (150, 150, 174) \\ 
& & $2$ & mean (S.D.)&{150.314} (1.349)&{150.566} (3.984)&{152.474} (18.578)&{155.496} (29.408)\\ 
& &  & quartiles & (150, 150, 150) & (150, 150, 150) & (150, 150, 150) & (150, 150, 150) \\ 
 &  &  &  &  &  &  \\ 
 & $0.5$ & $0.5$ & mean (S.D.)&{296.260} (22.306) &{292.904} (43.471)&{289.192} (64.033)&{287.966} (64.827)\\  
& &  & quartiles  & (292, 300, 303.25) & (288.75, 300, 305) & (273.75, 300, 308.25) & (285, 300, 303) \\ 
& & $1$ & mean (S.D.)&{298.240} (6.104)&{297.306} (9.361)&{293.116} (26.614)&{292.864} (37.601)\\  
& &  & quartiles & (299, 300, 300) & (299, 300, 300) & (298, 300, 300) & (300, 300, 300) \\ 
& & $2$ & mean (S.D.)&{299.604} (1.843)&{299.228} (3.385)&{298.350} (8.354)&{297.632} (14.525)\\ 
& &  & quartiles  & (300, 300, 300) & (300, 300, 300) & (300, 300, 300) & (300, 300, 300)
\end{tabular}
\label{sampling_distribution_CUSUM}
\end{table}
\end{landscape}

\section{Proofs}\label{Proofs}
In the following let  $F_k$ and $F_{k+1, n}$ denote the empirical distribution  functions of the first $k$ and last $n-k$ realizations of $Y_1, \ldots, Y_n$,  i.e.
\begin{align*}
&F_k(x):=\frac{1}{k}\sum\limits_{i=1}^k1_{\left\{Y_i\leq x\right\}},\\
&F_{k+1, n}(x):=\frac{1}{n-k}\sum\limits_{i=k+1}^n1_{\left\{Y_i\leq x\right\}}.
\end{align*}
For notational convenience we write $W_n(k)$ instead of $W_{k, n}$ and $SW_{n}(k)$ instead of $SW_{k, n}$.
The proofs in this section as well as the proofs in the appendix are partially influenced by arguments that  have been established in \cite{HorvathKokoszka1997}, \cite{Bai1994} and \cite{DehlingRoochTaqqu2013a}.
In particular, some arguments are based on the empirical process 
non-central limit theorem
of
\cite{DehlingTaqqu1989} which states that
\begin{align*}
d_{n, r}^{-1}\lfloor n\lambda\rfloor (F_{\lfloor n\lambda\rfloor}(x)-F(x))
\overset{\mathcal{D}}{\longrightarrow}\frac{1}{r!}J_r(x)Z_H^{(r)}(\lambda), 
\end{align*}
where $r$ is the Hermite rank defined in Assumption \ref{ass:subordination}, $Z_H^{(r)}$ is an $r$-th order Hermite process\footnote{If $r=1$, the Hermite process 
 equals a standard fractional Brownian motion process with Hurst parameter $H=1-\frac{D}{2}$. We refer 
to \cite{Taqqu1979} for a general definition of  Hermite processes.},   $H=1-\frac{rD}{2}\in \left(\frac{1}{2}, 1\right)$, and
     ``$\overset{\mathcal{D}}{\longrightarrow}$'' denotes convergence in distribution with respect to the $\sigma$-field generated by the open balls in $D\left(\left[-\infty, \infty\right]\times \left[0, 1\right]\right)$, equipped with the supremum norm.
     
The Dudley-Wichura version of Skorohod's representation theorem (see \cite{ShorackWellner1986}, Theorem 2.3.4) implies that,
for our purposes, we may assume 
without loss of generality that
\begin{align*}
\sup\limits_{\lambda\in\left[0, 1\right],  x\in \mathbb{R}}\left|d_{n, r}^{-1}\lfloor n\lambda\rfloor\left(F_{\lfloor n\lambda\rfloor}(x)-F(x)\right)-\frac{1}{r!}J_r(x)Z_H^{(r)}(\lambda)\right|\longrightarrow 0
\end{align*}
almost surely.

\begin{proof}[Proof of Proposition \ref{Prop:consistency}]

The proof of Proposition \ref{Prop:consistency} is based on an application of Lemma \ref{Lem:W_process_under_A} in the appendix. According to Lemma  \ref{Lem:W_process_under_A} it holds that, under the assumptions of Proposition \ref{Prop:consistency},   
\begin{align*}
\frac{1}{n^2 h_n}\sum\limits_{i=1}^{\lfloor n\lambda\rfloor }\sum\limits_{j=\lfloor n\lambda\rfloor +1}^{n}\left(1_{\left\{X_i\leq X_j\right\}}-\frac{1}{2}\right)
\overset{P}{\longrightarrow} C\delta_{\tau}(\lambda), \ 0\leq \lambda \leq 1,
\end{align*}
where $\delta_{\tau}:[0, 1]\longrightarrow \mathbb{R}$  is defined by
\begin{align*}
\delta_{\tau}(\lambda)=
\begin{cases}
\lambda(1-\tau)  &\text{for} \ \lambda\leq \tau\\
(1-\lambda)\tau  &\text{for} \ \lambda\geq \tau
\end{cases}
\end{align*}
and $C$ denotes some non-zero constant.

It directly follows that  $\frac{1}{nd_{n, r}}\max_{1 \leq k\leq n-1}|W_{n}(k)|\overset{P}{\longrightarrow}\infty$.

Furthermore,
\begin{align*}
\frac{1}{n^2h_n}\max\limits_{1\leq k\leq \lfloor n(\tau-\varepsilon)\rfloor}\left|\sum\limits_{i=1}^k\sum\limits_{j=k+1}^n\left(1_{\left\{X_i\leq X_j\right\}}-\frac{1}{2}\right)\right|
\intertext{converges in probability to}
C\sup\limits_{0\leq \lambda\leq \tau-\varepsilon}\delta_{\tau}(\lambda)
=C(\tau-\varepsilon)(1-\tau)
\end{align*}
for any $0\leq \varepsilon <\tau$.

For  $\varepsilon>0$ define
\begin{align*}
Z_{n, \varepsilon}:=\frac{1}{n^2h_n}\max\limits_{1\leq k\leq \lfloor n\tau\rfloor}\left|W_n(k)\right|-\frac{1}{n^2h_n}\max\limits_{1\leq k\leq \lfloor n(\tau-\varepsilon)\rfloor}\left|W_n(k)\right|.
\end{align*}
As $Z_{n, \varepsilon}\overset{P}{\longrightarrow} C(1-\tau)\varepsilon$, it follows that $P(\hat{k}_W<\lfloor n(\tau-\varepsilon) \rfloor)=P(Z_{n, \varepsilon}= 0)\longrightarrow 0$.

An analogous line of argument yields 
\begin{align*}
P(\hat{k}_W>\lfloor n(\tau+\varepsilon)\rfloor)\longrightarrow 0.
\end{align*}
All in all, it follows that for any $\varepsilon >0$
\begin{align*}
&\lim\limits_{n\longrightarrow \infty}P\left(\left|\frac{\hat{k}_W}{n}-\tau\right|> \varepsilon \right)=0.
\end{align*}
This proves consistency of the change point estimator which is based on the  Wilcoxon test statistic.

In the following it is shown that $\frac{1}{n}\hat{k}_{SW}$ is a consistent estimator, too.
For this purpose, we consider the process $SW_n(\lfloor n\lambda\rfloor)$, $0\leq \lambda\leq 1$.
According to \cite{Betken2016}  the limit of the self-normalized Wilcoxon test statistic can be obtained by an application of the continuous mapping theorem to the process
\begin{align*}
\frac{1}{a_n} \sum\limits_{i=1}^{\lfloor n\lambda\rfloor}\sum\limits_{j=\lfloor n\lambda\rfloor+1}^n\left(1_{\left\{X_i\leq X_j\right\}}-\frac{1}{2}\right), \ 0\leq \lambda \leq 1, 
\end{align*}
where $a_n$ denotes an appropriate normalization.
Therefore, it follows by the corresponding argument  in \cite{Betken2016}   that 
\begin{align*}
SW_n(\lfloor n\lambda\rfloor)\overset{P}{\longrightarrow}\frac{\left|\delta_{\tau}(\lambda)\right|}{\left\{\int_0^{\lambda}\left(\delta_{\tau}(t)-\frac{t}{\lambda}\delta_{\tau}(\lambda)\right)^2dt+\int_{\lambda}^1\left(\delta_{\tau}(t)-\frac{1-t}{1-\lambda} \delta_{\tau}(\lambda)\right)^2dt \right\}^{\frac{1}{2}}}
\end{align*}
uniformly in $\lambda \in [0, 1]$.
Elementary calculations yield
\begin{align*}
&\sup\limits_{\lfloor n\tau_1\rfloor \leq k\leq k_0-n\varepsilon}SW_n(k)
\overset{P}{\longrightarrow}\sup\limits_{\tau_1\leq \lambda\leq \tau-\varepsilon}\frac{\sqrt{3}\lambda\sqrt{1-\lambda}}{(\tau-\lambda)},\\
&\sup\limits_{k_0+n\varepsilon \leq k\leq \lfloor n\tau_2\rfloor}SW_n(k)
\overset{P}{\longrightarrow}\sup\limits_{\tau +\varepsilon\leq \lambda\leq \tau_2}\frac{\sqrt{3}\sqrt{\lambda}(1-\lambda)}{(\tau-\lambda)}.
\end{align*}
As $SW_n(k_0)\overset{P}{\longrightarrow}\infty$ due to Theorem 2 in \cite{Betken2016}, we  conclude that $P(\hat{k}_{SW}>k_0+ n\varepsilon)$ and $P(\hat{k}_{SW}<k_0-n\varepsilon)$ converge to $0$ in probability. This proves $\frac{1}{n}\hat{k}_{SW}\overset{P}{\longrightarrow}\tau$.
\end{proof}

\begin{proof}[Proof of Theorem \ref{convergence rate}]
In the following we write $\hat{k}$ instead of $\hat{k}_W$.
For convenience, we assume that $h>0$ under fixed changes, and that for some $n_0\in \mathbb{N}$ $h_n>0$ for all $n\geq n_0$  under local changes, respectively. Furthermore, we  subsume both changes under the general assumption that $\lim_{n\rightarrow\infty}h_n=h$ (under fixed changes $h_n=h$ for all $n\in \mathbb{N}$, under local changes $h=0$).  
In order to prove Theorem \ref{convergence rate},  we need to show that for all $\varepsilon>0$ there exists an $n(\varepsilon)\in \mathbb{N}$ and an $M>0$ such that
\begin{align*}
P\left(\left|\hat{k}-k_0\right|>M m_n\right)<\varepsilon
\end{align*}
for all $n\geq n(\varepsilon)$.

For $M\in \mathbb{R}^{+}$ define $D_{n, M}:=\left\{k\in \left\{1, \ldots, n-1\right\}\left|\right.\left|k-k_0\right|>Mm_n\right\}$.

We have
\begin{align*}
P\left(\left|\hat{k}-k_0\right|>M m_n\right)
\leq P\left(\sup\limits_{k\in D_{n, M}}\left|W_n(k)\right|\geq |W_{n}(k_0)|\right)\leq P_1+P_2
\end{align*}
with
\begin{align*}
&P_1:=P\left(\sup\limits_{k\in  D_{n, M}}\left(W_n(k)- W_{n}(k_0)\right)\geq 0\right), \\
&P_2:=P\left(\sup\limits_{k\in  D_{n, M}}\left(-W_n(k)-W_{n}(k_0)\right)\geq 0\right).
\end{align*}

Note that $D_{n, M}=D_{n, M}(1)\cup D_{n, M}(2)$, where 
\begin{align*}
&D_{n, M}(1):=\left\{k\in \left\{1, \ldots, n-1\right\}\left|\right.k_0-k>Mm_n\right\}, \\
&D_{n, M}(2):=\left\{k\in  \left\{1, \ldots, n-1\right\}\left|\right.k-k_0>Mm_n\right\}.
\end{align*}
Therefore,  
$P_2\leq P_{2, 1}+P_{2, 2}$, where
\begin{align*}
&P_{2,1}:= P\left(\sup\limits_{k\in  D_{n, M}(1)}\left(-W_n(k)-W_{n}(k_0)\right)\geq 0\right),\\
&P_{2, 2}:=P\left(\sup\limits_{k\in  D_{n, M}(2)}\left(-W_n(k)-W_{n}(k_0)\right)\geq 0\right). 
\end{align*}
In the following we will consider the first summand only. (For the second summand analogous implications  result  from the same argument.)

For this, we define 
\begin{align*}
\widehat{W}_n(k):=\delta_n(k)\Delta(h_n),
\end{align*}
where 
\begin{align*}
\delta_n(k):=\begin{cases}
k(n-k_0), & k\leq k_0\\
k_0(n-k), & k> k_0
\end{cases}
\end{align*}
and
\begin{align*}
\Delta(h_n):=\int \left(F(x+h_n)-F(x)\right)dF(x).
\end{align*}

Note that
\begin{align*}
P_{2, 1}
&\leq P\left(\sup\limits_{k\in  D_{n, M}(1)}\left( \widehat{W}_n(k) -W_n(k)+\widehat{W}_n(k_0)-W_{n}(k_0)\right) \geq \widehat{W}_n(k_0)\right)\\
&\leq P\left(2\sup\limits_{\lambda \in \left[0, \tau\right]}\left|W_n(\lfloor n\lambda\rfloor)- \widehat{W}_n(\lfloor n\lambda\rfloor) \right| \geq k_0(n-k_0)\Delta(h_n)\right).
\end{align*}
We have
\begin{align*}
&\sup\limits_{\lambda \in \left[0, \tau\right]}\left|W_n(\lfloor n\lambda\rfloor)- \widehat{W}_n(\lfloor n\lambda\rfloor) \right|\\
&=\sup\limits_{\lambda \in \left[0, \tau\right]}\Biggl|\sum\limits_{i=1}^{\lfloor n\lambda\rfloor}\sum\limits_{j=\lfloor n\tau\rfloor+1}^n\left(1_{\left\{Y_i\leq Y_j+h_n\right\}}-\int F(x+h_n)dF(x)\right)\\
&\quad +\sum\limits_{i=1}^{\lfloor n\lambda\rfloor}\sum\limits_{j=\lfloor n\lambda\rfloor+1}^{\lfloor n\tau\rfloor}\left(1_{\left\{Y_i\leq Y_j\right\}}-\frac{1}{2}\right)\Biggr|.
\end{align*}
Due to  Lemma \ref{Lem} in the appendix and Theorem 1.1  in \cite{DehlingRoochTaqqu2013a}
\begin{align*}
2\sup_{\lambda \in \left[0, \tau\right]}\left|W_n(\lfloor n\lambda\rfloor)- \widehat{W}_n(\lfloor n\lambda\rfloor) \right|=\mathcal{O}_P\left(nd_{n, r}\right),
\end{align*}
 i.e. for all $\varepsilon >0$ there exists a $K>0$ such that
 \begin{align*}
 P\left(2\sup_{\lambda \in \left[0, \tau\right]}\left|W_n(\lfloor n\lambda\rfloor)- \widehat{W}_n(\lfloor n\lambda\rfloor) \right|\geq Knd_{n, r}\right)<\varepsilon
 \end{align*}
 for all $n$. 
Furthermore,  $k_0(n-k_0)\Delta(h_n)\sim Cn^2h_n$ for some constant $C$.
 Note that $Knd_{n, r}\leq k_0(n-k_0)\Delta(h_n)$ if and only if
 \begin{align*}
 K\leq\frac{k_0}{n}\frac{n-k_0}{n}\frac{\Delta(h_n)}{h_n}\frac{nh_n}{d_{n, r}}.
 \end{align*}
The right hand side of the above inequality diverges if $h_n=h$ is fixed or if  $h_n^{-1}=o\left(\frac{n}{d_{n, r}}\right)$. Therefore,  it is possible to find an $n(\varepsilon)\in \mathbb{N}$ such that
\begin{align*}
P_{2, 1}&\leq  P\left(2\sup\limits_{\lambda \in \left[0, \tau\right]}\left|W_n(\lfloor n\lambda\rfloor)- \widehat{W}_n(\lfloor n\lambda\rfloor) \right| \geq k_0(n-k_0)\Delta(h_n)\right)\\
&\leq P\left(2\sup\limits_{\lambda \in \left[0, \tau\right]}\left|W_n(\lfloor n\lambda\rfloor)- \widehat{W}_n(\lfloor n\lambda\rfloor) \right| \geq K nd_{n, r}\right)\\
&<\varepsilon
\end{align*}
for all  $n\geq n(\varepsilon)$.

We will now turn to the summand $P_1$. 
We have $P_{1}\leq P_{1,1}+P_{1,2}$, where
\begin{align*}
&P_{1, 1}:=P\left(\sup\limits_{k\in  D_{n, M}(1)}W_n(k)- W_{n}(k_0)\geq 0\right),\\ 
&P_{1, 2}:=P\left(\sup\limits_{k\in  D_{n, M}(2)}W_n(k)- W_{n}(k_0)\geq 0\right).
\end{align*}
In the following we will consider the first summand only. (For the second summand analogous implications  result  from the same argument.)

We define a random sequence $k_n$, $n \in \mathbb{N}$, by choosing $k_n\in  D_{n, M}(1)$ such that
\begin{align*}
&\sup\limits_{k\in  D_{n, M}(1)}\left(W_n(k)-\widehat{W}_n(k)+\widehat{W}_n(k_0)-W_{n}(k_0)\right)\\
&=W_n(k_n)-\widehat{W}_n(k_n)+\widehat{W}_n(k_0)-W_{n}(k_0).
\end{align*}
Note that for any  sequence $k_n$, $n\in\mathbb{N}$, with $k_n\in D_{n, M}(1)$
\begin{align*}
\widehat{W}_n(k_0)-\widehat{W}_n(k_n)
=(n-k_0) l_n\Delta(h_n)
\end{align*}
where $l_n:=k_0-k_n$.
Since $k_n\in D_{n, M}(1)$ and $m_n\longrightarrow \infty$ we have 
\begin{align*}
\frac{l_n}{d_{l_n, r}}=l_n^{1-H}L^{-\frac{r}{2}}(l_n)\geq (Mm_n)^{1-H}L^{-\frac{r}{2}}(Mm_n)
\end{align*}
for $n$ sufficiently large. Thus, we have
\begin{align*}
\frac{1}{nd_{l_n, r}}\left(\widehat{W}_n(k_0)-\widehat{W}_n(k_n)\right)&\geq\frac{n-k_0}{n}  \frac{m_n}{d_{m_n, r}}M^{1-H}\frac{L^{\frac{r}{2}}(m_n)}{L^{\frac{r}{2}}(Mm_n)}\Delta(h_n).
\end{align*}
If $h_n$ is fixed, the right hand side of the inequality diverges.
Under local changes the right hand side  asymptotically behaves like 
\begin{align*}
(1-\tau)M^{1-H}\int f^2(x)dx,
\end{align*}
since, in this case, $h_n\sim \frac{d_{m_n, r}}{m_n}$ due to the assumptions of Theorem \ref{convergence rate}.

In any case, for $\delta>0$ it is possible to find an $n_0\in \mathbb{N}$ such that 
\begin{align*}
\frac{1}{nd_{l_n, r}}\left(\widehat{W}_n(k_0)-\widehat{W}_n(k_n)\right)\geq   M^{1-H} (1-\tau)\int f^2(x)dx -\delta
\end{align*}
for all $n\geq n_0$.

All in all, the previous considerations show that there exists an $n_0\in \mathbb{N}$ and a constant $K$ such that for all $n\geq n_0$
\begin{align*}
P_{1,1}\leq P\left(\sup\limits_{k\in  D_{n, M}(1)}\frac{1}{nd_{k_0-k,r}}\left(W_n(k)-\widehat{W}_n(k)+\widehat{W}_n(k_0)-W_{n}(k_0)\right)  \geq  b(M)\right)
\end{align*}
where $b(M):=K M^{1-H}-\delta$ with $\delta>0$ fixed.

Some elementary calculations  show that for $k\leq k_0$
\begin{align*}
W_n(k)-\widehat{W}_n(k)+\widehat{W}_n(k_0)-W_n(k_0)
=A_{n, 1}(k)+A_{n, 2}(k)+A_{n, 3}(k)+A_{n, 4}(k),
\end{align*}
where
\begin{align*}
&A_{n, 1}(k):=-(n-k_0)(k_0-k)\int \left(F_{k, k_0}(x+h_n)-F(x+h_n)\right)d F_{k_0, n}(x),\\
&A_{n, 2}(k):=-(n-k_0)(k_0-k)\int \left( F_{k_0, n}(x)- F(x)\right)d F(x+h_n),\\
&A_{n, 3}(k):=(k_0-k)k\int \left(F_{k}(x)-F(x)\right)d F_{k, k_0}(x),\\
&A_{n, 4}(k):=-k (k_0-k)\int \left( F_{k, k_0}(x)- F(x)\right)dF(x).
\end{align*}
Thus, for $n\geq n_0$
\begin{align*}
P_{1, 1}
&\leq P\left(\sup\limits_{k\in  D_{n, M}(1)}\frac{1}{nd_{k_0-k, r}}\sum\limits_{i=1}^4\left|A_{n, i}(k)\right|\geq  b(M)\right)\\
&\leq \sum\limits_{i=1}^{4} P\left(\sup\limits_{k\in  D_{n, M}(1)}\frac{1}{nd_{k_0-k, r}}\left|A_{n, i}(k)\right|\geq \frac{1}{4}b(M)\right).
\end{align*}

For each $i\in \left\{1, \ldots, 4\right\}$ it will be shown that
\begin{align*}
P\left(\sup\limits_{k\in  D_{n, M}(1)}\frac{1}{nd_{k_0-k, r}}\left|A_{n, i}(k)\right|\geq \frac{1}{4}b(M)\right)<\frac{\varepsilon}{4} 
\end{align*}
for $n$ and $M$ sufficiently large.

\begin{enumerate}
\item 
Note that
\begin{align*}
&\sup\limits_{k\in  D_{n, M}(1)}\frac{1}{nd_{k_0-k, r}}\left|A_{n, 1}(k)\right|\\
&\leq \sup\limits_{k\in  D_{n, M}(1)}\sup\limits_{x \in \mathbb{R}}\left|d_{k_0-k, r}^{-1} (k_0-k)\left(F_{k, k_0}(x)-F(x)\right)\right|.
\end{align*}

Due to stationarity
\begin{align*}
&\sup\limits_{k\in  D_{n, M}(1)}\sup\limits_{x \in \mathbb{R}}\left|d_{k_0-k, r}^{-1} (k_0-k)\left(F_{k, k_0}(x)-F(x)\right)\right|\\
&\overset{\mathcal{D}}{=}\sup\limits_{k\in  D_{n, M}(1)}\sup\limits_{x \in \mathbb{R}}\left|d_{k_0-k, r}^{-1} (k_0-k)\left(F_{k_0-k}(x)-F(x)\right)\right|.
\end{align*}

Note that
\begin{align*}
&\sup\limits_{k\in  D_{n, M}(1)}\sup\limits_{x \in \mathbb{R}}\left|d_{k_0-k, r}^{-1} (k_0-k)\left(F_{k_0-k}(x)-F(x)\right)\right|\\
&\leq\sup\limits_{k\in  D_{n, M}(1)}\sup\limits_{x \in \mathbb{R}}\left|d_{k_0-k, r}^{-1} (k_0-k)\left(F_{k_0-k}(x)-F(x)\right)-\frac{1}{r!}Z_H^{(r)}(1)J_r(x)\right|\\
&\quad+\frac{1}{r!}\left|Z_H^{(r)}(1)\right|\sup\limits_{x\in \mathbb{R}}\left|J_r(x)\right|.
\end{align*}

Since  
\begin{align*}
\sup\limits_{x \in \mathbb{R}}\left|d_{n, r}^{-1} n\left(F_{n}(x)-F(x)\right)-\frac{1}{r!}Z_H^{(r)}(1)J_r(x)\right|\longrightarrow 0 \ a.s.
\end{align*} 
if $n\longrightarrow \infty$, 
and as  $k_0-k\geq M m_n$ with $m_n\longrightarrow \infty$, it follows that 
\begin{align*}
\sup\limits_{k\in  D_{n, M}(1)}\sup\limits_{x \in \mathbb{R}}\left|d_{k_0-k, r}^{-1} (k_0-k)\left(F_{k_0-k}(x)-F(x)\right)-\frac{1}{r!}Z_H^{(r)}(1)J_r(x)\right|
\end{align*}
converges to $0$ almost surely.
 Therefore, 
\begin{align*}
&P\left(\sup\limits_{k\in  D_{n, M}(1)}\frac{1}{nd_{k_0-k, r}}\left|A_{n, 1}(k)\right|\geq \frac{1}{4}b(M)\right)\\
&\leq P\left(\sup\limits_{k\in  D_{n, M}(1)}\sup\limits_{x \in \mathbb{R}}\left|d_{k_0-k, r}^{-1} (k_0-k)\left(F_{k, k_0}(x)-F(x)\right)\right|
\geq \frac{1}{4}b(M)\right)\\
&\leq P\left(\frac{1}{r!}\left|Z_H^{(r)}(1)\right|\sup\limits_{x\in \mathbb{R}}\left|J_r(x)\right|
\geq \frac{1}{4}b(M)\right)+\frac{\varepsilon}{8}.
\end{align*}
for $n$   sufficiently large.
Note that $\sup_{x\in \mathbb{R}}\left|J_r(x)\right|<\infty$.
Furthermore, it is well-known that all moments of Hermite processes are finite.
As a result, it follows by Markov's inequality that for some $M_0\in \mathbb{R}$
\begin{align*}
&P\left(\frac{1}{r!}\left|Z_H^{(r)}(1)\right|\sup\limits_{x\in \mathbb{R}}\left|J_r(x)\right|
\geq \frac{1}{4}b(M)\right)\leq \E\left|Z_H^{(r)}(1)\right|\frac{4 r!}{\sup\limits_{x\in \mathbb{R}}\left|J_r(x)\right|b(M)}<\frac{\varepsilon}{8}
\end{align*}
for all $M\geq M_0$.
\item 
We have
\begin{align*}
&\sup\limits_{k\in  D_{n, M}(1)}\frac{1}{nd_{k_0-k, r}}\left|A_{n, 2}(k)\right|\\
&\leq \left|d_{n, r}^{-1}(n-k_0)\int \left( F_{k_0, n}(x)- F(x)\right)d F(x+h_n)\right|
\end{align*}
for $n$ sufficiently large.
As a result,  
\begin{align*}
\sup\limits_{k\in  D_{n, M}(1)}\frac{1}{nd_{k_0-k, r}}\left|A_{n, 2}(k)\right|
\leq \sup\limits_{x\in\mathbb{R}}\left|d_{n, r}^{-1}(n-k_0) \left( F_{k_0, n}(x)- F(x)\right)\right|.
\end{align*}
Due to the empirical process non-central limit theorem of \cite{DehlingTaqqu1989} we have
\begin{align*}
\sup\limits_{x\in \mathbb{R}}\left|d_{n, r}^{-1}(n-k_0)\left( F_{k_0, n}(x)- F(x)\right)\right|\overset{\mathcal{D}}{\longrightarrow} \frac{1}{r!}\left|Z_H^{(r)}(1)-Z_H^{(r)}(\tau))\right|\sup\limits_{x\in \mathbb{R}} \left|J_r(x)\right|.
\end{align*}
Moreover,
\begin{align*}
\frac{1}{r!}\left|Z_H^{(r)}(1)-Z_H^{(r)}(\tau)\right|\sup\limits_{x\in \mathbb{R}} \left|J_r(x)\right|\overset{\mathcal{D}}{=} \frac{1}{r!} (1-\tau)^{H}\left|Z_H^{(r)}(1)\right|\sup\limits_{x\in \mathbb{R}}\left|J_r(x)\right|
\end{align*}
since $Z_H^{(r)}$ is a $H$-self-similar process with stationary increments.
Thus, we have
\begin{align*}
 &P\left(\sup\limits_{k\in  D_{n, M}(1)}\frac{1}{nd_{k_0-k, r}}\left|A_{n, 2}(k)\right|\geq \frac{1}{4}b(M)\right)\\
 &\leq
  P\left( \frac{1}{r!} (1-\tau)^{H}\left|Z_H^{(r)}(1)\right|\sup\limits_{x\in \mathbb{R}}\left|J_r(x)\right|\geq \frac{1}{4}b(M)\right)+\frac{\varepsilon}{8}
\end{align*}
for $n$ sufficiently large.
Again, it follows by Markov's inequality that
\begin{align*}
  P\left( \frac{1}{r!} (1-\tau)^{H}\left|Z_H^{(r)}(1)\right|\sup\limits_{x\in \mathbb{R}}\left|J_r(x)\right|\geq \frac{1}{4}b(M)\right)  <
\frac{\varepsilon}{8}
\end{align*}
for $M$ sufficiently large.
\item
Note that 
\begin{align*}
\frac{1}{n d_{k_0-k, r}}\left|A_{n, 3}(k)\right|
\leq\left|d_{n, r}^{-1}k\int \left(F_{k}(x)-F(x)\right) d F_{k, k_0}(x)\right|
\end{align*}
for $n$ sufficiently large.
Therefore, 
\begin{align*}
\sup\limits_{k\in  D_{n, M}(1)}\frac{1}{n d_{k_0-k, r}}\left|A_{n, 3}(k)\right|
\leq \sup\limits_{x\in \mathbb{R}, 0\leq \lambda\leq 1}\left|d_{n, r}^{-1}\lfloor n\lambda\rfloor  \left(F_{\lfloor n\lambda\rfloor}(x)-F(x)\right)\right|.
\end{align*}
The expression on the right hand side of  the inequality  converges in distribution to 
\begin{align*}
\frac{1}{r!}\sup\limits_{0\leq \lambda\leq 1}\left|Z_H^{(r)}(\lambda)\right|\sup\limits_{x\in \mathbb{R}}\left|J_r(x)\right|
\end{align*}
due to the empirical process non-central limit theorem.
Since 
\begin{align*}
\left\{Z_H^{(r)}(\lambda), \ 0\leq \lambda\leq 1\right\}
\overset{\mathcal{D}}{=}\left\{\lambda^H Z_H^{(r)}(1), \ 0\leq \lambda\leq 1\right\},
\end{align*}
we have 
\begin{align*}
\sup\limits_{0\leq \lambda\leq 1}\left|Z_H^{(r)}(\lambda)\right|
\overset{\mathcal{D}}{=}|Z_H^{(r)}(1)|.
\end{align*}
As a result, the aforementioned argument yields
\begin{align*}
 &P\left(\sup\limits_{k\in  D_{n, M}(1)}\frac{1}{nd_{k_0-k, r}}\left|A_{n, 3}(k)\right|\geq \frac{1}{4}b(M)\right)\\
 &\leq
  P\left(\frac{1}{r!}\left|Z_H^{(r)}(1)\right|\sup\limits_{x\in \mathbb{R}}\left|J_r(x)\right|\geq \frac{1}{4}b(M)\right)+\frac{\varepsilon}{8}\\
 &<
\frac{\varepsilon}{4}
\end{align*}
for $n$ and $M$ sufficiently large.
\item  
We have
\begin{align*}
&\sup\limits_{k\in  D_{n, M}(1)}\frac{1}{nd_{k_0-k, r}}\left|A_{n, 4}(k)\right|\\
&\leq \sup\limits_{k\in  D_{n, M}(1)}\sup\limits_{x\in \mathbb{R}}\left|d_{k_0-k, r}^{-1} (k_0-k)\left( F_{k, k_0}(x)- F(x)\right)\right|.
\end{align*}
Hence, the same argument that has been used to obtain an analogous result for $A_{n, 1}$  can be applied to  conclude that 
\begin{align*}
 &P\left(\sup\limits_{k\in  D_{n, M}(1)}\frac{1}{nd_{k_0-k, r}}\left|A_{n, 4}(k)\right|\geq \frac{1}{4}b(M)\right)<\frac{\varepsilon}{4}\\
\end{align*}
for $n$ and $M$ sufficiently large.
\end{enumerate}
All in all, it follows that for all $\varepsilon>0$ there exists an $n(\varepsilon)\in \mathbb{N}$ and an $M>0$ such that
\begin{align*}
P\left(\left|\hat{k}-k_0\right|>M m_n\right)<\varepsilon
\end{align*}
for all $n\geq n(\varepsilon)$. This proves Theorem \ref{convergence rate}.
\end{proof}
\begin{proof}[Proof of Theorem \ref{thm:asymp_distr}]
Note that 
\begin{align*}
&W_n^2(k_0+\lfloor m_n s\rfloor)-W_n^2(k_0)\\
&=\left(W_n(k_0+\lfloor m_n s\rfloor)-W_n(k_0)\right)\left(W_n(k_0+\lfloor m_n s\rfloor)+W_n(k_0)\right).
\end{align*}
We will show that (with an appropriate normalization) $W_n(k_0+\lfloor m_n s\rfloor)-W_n(k_0)$ converges in distribution to a non-deterministic limit process whereas $W_n(k_0+\lfloor m_n s\rfloor)+W_n(k_0)$ (with stronger normalization) converges in probability to a deterministic expression.
For notational convenience we write $d_{m_n}$ instead of $d_{m_n, 1}$, $J$ instead of $J_1$,  $\hat{k}$ instead of $\hat{k}_W$ and we define  $l_n(s):=k_0+\lfloor m_n s\rfloor$.
We have 
\begin{align*}
W_n(k_0+\lfloor m_n s\rfloor)-W_n(k_0)=\tilde{V}_n(l_n(s))+V_n(l_n(s)),
\end{align*}
where 
\begin{align*}
\tilde{V}_n(l)&=\begin{cases}
-\sum\limits_{i=l+1}^{k_0}\sum\limits_{j=k_0+1}^n\left(1_{\left\{Y_i\leq Y_j+h_n\right\}}-1_{\left\{Y_i\leq Y_j\right\}}\right)   &\text{if $s<0$}\\
- \sum\limits_{i=1}^{k_0}\sum\limits_{j=k_0+1}^{l}\left(1_{\left\{Y_i\leq Y_j+h_n\right\}}-1_{\left\{Y_i\leq Y_j\right\}}\right)  &\text{if $s>0$}\\
\end{cases}
\end{align*}
and
\begin{align*}
V_n(l)=\begin{cases}
\sum\limits_{i=1}^{l}\sum\limits_{j=l+1}^{k_0}\left(1_{\left\{Y_i\leq Y_j\right\}}-\frac{1}{2}\right)-\sum\limits_{i=l+1}^{k_0}\sum\limits_{j=k_0+1}^n\left(1_{\left\{Y_i\leq Y_j\right\}}-\frac{1}{2}\right)    &\text{if $s<0$}\\
\sum\limits_{i=k_0+1}^{l}\sum\limits_{j=l+1}^{n}\left(1_{\left\{Y_i\leq Y_j\right\}}-\frac{1}{2}\right)-\sum\limits_{i=1}^{k_0}\sum\limits_{j=k_0+1}^{l}\left(1_{\left\{Y_i\leq Y_j\right\}}-\frac{1}{2}\right)   &\text{if $s>0$}
\end{cases}.
\end{align*}

We will show that $\frac{1}{nd_{m_n}}\tilde{V}_n(l_n(s))$ converges to $h(s; \tau)$ in probability
and that  
$\frac{1}{nd_{m_n}}V_n(l_n(s))$ converges in distribution to $\sign(s)B_H(s)\int J(x)dF(x)$ in $D\left[-M, M\right]$.

We rewrite $\tilde{V}_n(l_n(s))$  in the following way:
\begin{align*}
&\tilde{V}_n(l_n(s))
=-(k_0-l_n(s))(n-k_0) \int \left(F_{l_n(s), k_0}(x+h_n)-F_{l_n(s), k_0}(x)\right)dF_{k_0, n}(x)
\intertext{if $s<0$,}
&\tilde{V}_n(l_n(s))
=- k_0(l_n(s)-k_0) \int \left(F_{k_0}(x+h_n)-F_{k_0}(x)\right)dF_{k_0, l_n(s)}(x) 
\end{align*}
if $s>0$.

For $s<0$ the limit of $\frac{1}{nd_{m_n}}\tilde{V}_n(l_n(s))$ corresponds to the limit of 
\begin{align*}
-(1-\tau)d_{m_n}^{-1}(k_0-l_n(s)) \int \left(F(x+h_n)-F(x)\right)dF(x) 
\end{align*}
due to Lemma \ref{Lem:int_sq_dens} and stationarity of the random sequence $Y_i$, $i\geq 1$.
 Note that
\begin{align*}
&d_{m_n}^{-1}(k_0-l_n(s)) \int \left(F(x+h_n)-F(x)\right)dF(x)\\
&=-d_{m_n}^{-1}\lfloor m_ns\rfloor h_n \int \frac{1}{h_n}\left(F(x+h_n)-F(x)\right)dF(x).
\end{align*}
The above expression converges to $-s\int f^2(x)dx$, since $h_n\sim \frac{d_{m_n}}{m_n}$.

\vspace{2mm}

For $s>0$
the limit of $\frac{1}{nd_{m_n}}\tilde{V}_n(l_n(s))$ corresponds to the limit of 
\begin{align*}
- \tau d_{m_n}^{-1}(l_n(s)-k_0) \int \left(F(x+h_n)-F(x)\right)dF(x)
\end{align*}
due to Lemma \ref{Lem:int_sq_dens} and stationarity of the random sequence $Y_i$, $i\geq 1$.
Note that
\begin{align*}
&d_{m_n}^{-1}(l_n(s)-k_0) \int \left(F(x+h_n)-F(x)\right)dF(x)\\
&=d_{m_n}^{-1}\lfloor m_ns\rfloor h_n\int \frac{1}{h_n}\left(F(x+h_n)-F(x)\right)dF(x)
\end{align*}
The above expression converges to $s\int f^2(x)dx$, since $h_n\sim \frac{d_{m_n}}{m_n}$.\\
All in all,  it follows that $\frac{1}{nd_{m_n}}\tilde{V}_n(l_n(s))$ converges to 
$h(s; \tau)$ defined by
\begin{align*}
h(s; \tau)=
\begin{cases}
s(1-\tau)\int f^2(x)dx  &\text{if $s\leq 0$}\\
-s\tau \int f^2(x)dx  &\text{if $s> 0$}
\end{cases}
.
\end{align*}

\vspace{5mm}

In the following it is shown that $\frac{1}{nd_{m_n}}V_n(l_n(s))$ converges in distribution to
\begin{align*}
\sign(s)B_H(s)\int J(x)dF(x), \ -M\leq s\leq M.
\end{align*}
Note that if $s<0$,
\begin{align*}
V_n(l_n(s))
=&-l_n(s)(k_0-l_n(s))\int 
\left(F_{l_n(s), k_0}(x)-F(x)\right)dF_{l_n(s)}(x)\\
&-(k_0-l_n(s))(n-k_0)\int \left(F_{l_n(s), k_0}(x)-F(x)\right)dF_{k_0, n}(x)\\
&+l_n(s)(k_0-l_n(s))\int (F_{l_n(s)}(x)-F(x))dF(x)\\
&+(k_0-l_n(s))(n-k_0)\int \left(F_{k_0, n}(x)-F(x)\right)dF(x).
\end{align*}
If $s>0$, we have
\begin{align*}
V_n(l_n(s))
=&(l_n(s)-k_0)(n-l_n(s))\int \left(F_{k_0, l_n(s)}(x)-F(x)\right)dF_{l_n(s), n}(x)\\
&+k_0(l_n(s)-k_0)\int\left(F_{k_0, l_n(s)}(x)-F(x)\right)(x)dF_{k_0}(x)\\
&-(l_n(s)-k_0)(n-l_n(s))\int \left(F_{l_n(s), n}(x)-F(x)\right)dF(x)\\
&-k_0(l_n(s)-k_0)\int \left(F_{k_0}(x)- F(x)\right)dF(x).
\end{align*}
The arguments that appear in the proof of Lemma \ref{Lem:int_sq_dens} can also be applied to show that the limit of $\frac{1}{nd_{m_n}}V_n(l_n(s))$ corresponds to the limit of
\begin{align*}
\frac{1}{nd_{m_n}}\left(A_{1, n}(s)+A_{2, n}(s)+A_{3, n}(s)\right),
\end{align*}
where
\begin{align*}
&A_{1, n}(s):=
 (-l_n(s)-n+k_0)(k_0-l_n(s))\int 
\left(F_{l_n(s), k_0}(x)-F(x)\right)dF(x)
\intertext{if $s<0$,}
&A_{1, n}(s):=(n-l_n(s)+k_0)(l_n(s)-k_0)\int 
\left(F_{k_0, l_n(s)}(x)-F(x)\right)dF(x) 
\intertext{if $s>0$,}
&A_{2, n}(s):=
\begin{cases}
(k_0-l_n(s))l_n(s)\int (F_{l_n(s)}(x)-F(x))dF(x)  &\text{if $s<0$}\\
-(l_n(s)-k_0)(n-l_n(s))\int\left(F_{l_n(s), n}(x)-F(x)\right)dF(x)  &\text{if $s>0$}
\end{cases},\\
&A_{3, n}(s):=
\begin{cases}
(k_0-l_n(s))(n-k_0)\int \left(F_{k_0, n}(x)-F(x)\right)dF(x)  &\text{if $s<0$}\\
-(l_n(s)-k_0)k_0\int \left(F_{k_0}(x)-F(x)\right)dF(x)  &\text{if $s>0$}
\end{cases}.
\end{align*}

Note that for $s<0$
\begin{align*} 
 \frac{1}{nd_{m_n}}A_{2, n}(s)=-\frac{1}{nd_{m_n}}\lfloor m_ns\rfloor l_n(s)\int (F_{l_n(s)}(x)-F(x))dF(x).
 \end{align*} 
The above expression	  converges to $0$ uniformly in $s$, since $\frac{m_n}{d_{m_n}}=o(\frac{n}{d_n})$ and since
\begin{align*}
&\sup\limits_{-M\leq s\leq 0}\left|d_n^{-1}l_n(s)\int (F_{l_n(s)}(x)-F(x))dF(x)\right|\\
&\leq \sup\limits_{x, \lambda}\left|
d_n^{-1}\lfloor n\lambda\rfloor(F_{\lfloor n\lambda\rfloor}(x)-F(x))-
B_H(\lambda) J(x)\right|\\
& \quad+\sup\limits_{0\leq\lambda\leq1}\left|
B_H(\lambda)\right| \left|\int J(x)dF(x)\right|,
\end{align*}
i.e. $\sup_{-M\leq s\leq 0}\left|d_n^{-1}l_n(s)\int (F_{l_n(s)}(x)-F(x))dF(x)\right|$ is bounded in probability.
An analogous argument shows that $\frac{1}{nd_{m_n}}A_{3, n}(s)$ vanishes if $n$ tends to $\infty$.

Therefore, it remains to show that  $\frac{1}{nd_{m_n}}A_{1, n}(s)$  converges in distribution to a non-deterministic expression.
Due to stationarity 
\begin{align*}
 \frac{1}{nd_{m_n}}A_{1, n}(s)\overset{\mathcal{D}}{=} \frac{n+\lfloor m_ns\rfloor}{n}d_{m_n}^{-1}\lfloor m_ns\rfloor\int 
\left(F_{-\lfloor m_ns\rfloor}(x)-F(x)\right)dF(x)
\end{align*}
for $s<0$.
As a result,  $ \frac{1}{nd_{m_n}}A_{1, n}(s)$  converges in distribution to $-B_H(s)\int J(x)dF(x)$.

If  $s>0$, an application of the previous arguments shows that  $\frac{1}{nd_{m_n}}A_{2, n}(s)$ and $\frac{1}{nd_{m_n}}A_{3, n}(s)$ converge to $0$ whereas $\frac{1}{nd_{m_n}}A_{1, n}(s)$  converges in distribution to $B_H(s)\int J(x)dF(x)$.

All in all, it follows that
\begin{align*}
\frac{1}{nd_{m_n}}\left(W_n(k_0+\lfloor m_n s\rfloor)-W_n(k_0)\right)\overset{\mathcal{D}}{\longrightarrow}
\sign(s)B_H(s)\int J(x)dF(x)+h(s; \tau)
\end{align*}
in $D[-M, M]$.

Furthermore, it follows that with the stronger normalization $h_nn^2$ the limit of $\frac{1}{h_n n^2}W_n(k_0+\lfloor m_n s\rfloor)$
corresponds to the limit of $\frac{1}{h_n n^2}W_n(k_0)$.

We have
\begin{align*}
\frac{1}{h_nn^2}W_n(k_0)
&=\frac{1}{h_nn^2}k_0(n-k_0)\int\left(F_{k_0}(x+h_n)-F_{k_0}(x)\right)dF_{k_0, n}(x)\\
&\quad +\frac{1}{h_nn^2}\sum\limits_{i=1}^{k_0}\sum\limits_{j=k_0+1}^n\left(1_{\left\{Y_i\leq Y_j\right\}}-\frac{1}{2}\right).
\end{align*}

The second summand on the right hand side vanishes as $n$ tends to $\infty$, since $h_n^{-1}=o\left(n/d_n\right)$.
Due to Lemma \ref{Lem:int_sq_dens} the limit of  $d_n^{-1}k_0\int\left(F_{k_0}(x+h_n)-F_{k_0}(x)\right)dF_{k_0, n}(x)$ corresponds to the limit of  $d_n^{-1}k_0\int\left(F(x+h_n)-F(x)\right)dF(x)$. Therefore, 
\begin{align*}
h_n^{-1}\int\left(F_{k_0}(x+h_n)-F_{k_0}(x)\right)dF_{k_0, n}(x)\longrightarrow \int f^2(x)dx \quad a.s.
\end{align*}
In addition, $\frac{k_0}{n}\frac{(n-k_0)}{n}\longrightarrow \tau(1-\tau)$.

From this we can conclude that
\begin{align*}
\frac{1}{h_nn^2}\left(W_n(k_0+m_ns)+W_n(k_0)\right)\overset{P}{\longrightarrow}2\tau(1-\tau)\int f^2(x)dx
\end{align*}
in $D[-M, M]$.
This completes the proof of the first assertion in Theorem \ref{thm:asymp_distr}.

In order to show  that
\begin{align*}
&m_n^{-1}(\hat{k}-k_0)\overset{\mathcal{D}}{\longrightarrow}\argmax_{-\infty < s <\infty}\left(\sign(s)B_H(s)\int J(x)dF(x)+h(s; \tau)\right),
\end{align*}
we make use of Lemma \ref{Lem:sargmax}.

For this purpose, we note that according to Lifshits' criterion for unimodality of Gaussian processes  (see Theorem 1.1  in \cite{Ferger1999})   the random function $G_{H, \tau}(s)=\sign(s)B_H(s)\int J(x)dF(x)+h(s; \tau)$ attains its maximal value in $[-M, M]$ at a unique point  with probability $1$ for every $M>0$.
Hence,  an application of Lemma \ref{Lem:sargmax} in the appendix yields
\begin{align*}
\sargmax_{s\in [-M, M]}\frac{1}{e_n}\left(W_n^2(k_0+\lfloor m_n s\rfloor)-W_n^2(k_0)\right)\overset{\mathcal{D}}{\longrightarrow}\argmax_{s\in [-M, M]}G_{H, \tau}(s).
\end{align*}
It remains to be shown that instead of considering the $\sargmax$ in $[-M, M]$ we may as well consider the smallest $\argmax$ in $\mathbb{R}$.
By the law of the iterated logarithm for fractional Brownian motions we have  $\lim_{|s|\rightarrow\infty}\frac{B_H(s)}{s}=0$ a.s. so that
$\sign(s)B_H(s)\int J(x)dF(x)+h(s; \tau)\longrightarrow -\infty$  a.s. if $|s|\rightarrow \infty$. Therefore,
the limit corresponds to $\argmax_{s\in (-\infty, \infty)}G_{H, \tau}(s)$ if  $M$ is sufficiently large.

For $M>0$ define 
\begin{align*}
\hat{\hat{k}}(M):=\min\left\{k: \left|k_0- k\right|\leq  Mm_n, \ \left|W_{n}(k)\right|=\max\limits_{|k_0-i|\leq Mm_n}\left|W_{n}(i)\right|\right\}.
\end{align*}

Note that 
\begin{align*}
&\Bigl|\sargmax_{s\in [-M, M]}\left(W_n^2(k_0+\lfloor m_n s\rfloor)-W_n^2(k_0)\right)\\
&-\sargmax_{s\in (-\infty, \infty)}\left(W_n^2(k_0+\lfloor m_n s\rfloor)-W_n^2(k_0)\right)\Bigr|\\
&=m_n^{-1}\left|\hat{\hat{k}}(M)-\hat{k}\right| + \mathcal{O}_P(1).
\end{align*}
Therefore, we have to show  that for some $M\in \mathbb{R}$ 
\begin{align*}
m_n^{-1}\left|\hat{\hat{k}}(M)-\hat{k}\right|
\overset{P}{\longrightarrow}0
\end{align*}
as $n$ tends to infinity.
Note that
\begin{align*}
P\left(\hat{k}=\hat{\hat{k}}(M)\right)
&= P\left(\left|\hat{k}-k_0\right|\leq Mm_n\right)\\
&=1-P\left(\left|\hat{k}-k_0\right|> Mm_n\right).
\end{align*}
Furthermore, we have
\begin{align*}
&\lim\limits_{M\rightarrow\infty}\liminf_{n\rightarrow \infty}\left(1-P\left(|\hat{k}-k_0|>Mm_n\right)\right)\\
&=1-\lim\limits_{M\rightarrow\infty}\limsup_{n\rightarrow \infty}P\left(|\hat{k}-k_0|>Mm_n\right)\\
&=1
\end{align*}
because $|\hat{k}-k_0|=O_P(m_n)$ by Theorem \ref{convergence rate}.
As a result, we have
\begin{align*}
\lim\limits_{M\rightarrow\infty}\liminf_{n\rightarrow \infty}P\left(\hat{k}=\hat{\hat{k}}(M)\right)=1.
\end{align*}
Hence,  for all $\varepsilon>0$ there is an $M_0\in \mathbb{R}$ and an $n_0\in \mathbb{N}$ such that 
\begin{align*}
P\left(\hat{k}\neq \hat{\hat{k}}(M)\right)<\varepsilon
\end{align*}
for all $n\geq n_0$ and all $M\geq M_0$.
This concludes the proof of Theorem \ref{thm:asymp_distr}.
\end{proof}

\newpage
\bibliographystyle{imsart-nameyear} 
\bibliography{PaperCPE}

\appendix

\section{Auxiliary Results}

In the following we prove some Lemmas that are needed for the proofs of our main results.
Lemma \ref{Lem:W_process_under_A} characterizes the asymptotic behavior of the Wilcoxon process under the assumption of a change-point in the mean. It is used to prove consistency of the change-point estimators $\hat{k}_{\text{W}}$ and $\hat{k}_{SW}$. 
\begin{Lem}\label{Lem:W_process_under_A}
Define $\delta_{\tau}:[0, 1]\longrightarrow \mathbb{R}$   by
\begin{align*}
\delta_{\tau}(\lambda)=
\begin{cases}
\lambda(1-\tau)  &\text{for} \ \lambda\leq \tau\\
(1-\lambda)\tau  &\text{for} \ \lambda\geq \tau
\end{cases}.
\end{align*}
Assume that Assumption \ref{ass:subordination} holds and that either 
\begin{enumerate}
\item[a)]  $h_n =  h$ with $h\neq 0$, 
\end{enumerate}
or
\begin{enumerate}
\item[b)] $\lim_{n\rightarrow \infty}h_n=0$ with $h_n^{-1}=\hbox{o}\left(\frac{n}{d_{n, r}}\right)$ and   $F$ has a bounded
density $f$.
\end{enumerate}
Then, we have
\begin{align*}
\frac{1}{n^2h_n}\sum\limits_{i=1}^{\lfloor n\lambda\rfloor}\sum\limits_{j=\lfloor n\lambda\rfloor+1}^n\left(1_{\left\{X_i\leq X_j\right\}}-\frac{1}{2}\right)\overset{P}{\longrightarrow} C\delta_{\tau}(\lambda), \ 0\leq \lambda \leq 1,
\end{align*}
where
\begin{align*}
C:=\begin{cases}
\frac{1}{h}\int \left(F(x+h)-F(x)\right)dF(x)    &\text{if } h_n =  h, \ h\neq 0,\\
\int f^2(x)dx  &\text{if } \lim_{n\rightarrow \infty}h_n= 0 \text{ and } h_n^{-1}=\hbox{o}\left(\frac{n}{d_{n, r}}\right)
\end{cases}.\notag
\end{align*}
\end{Lem}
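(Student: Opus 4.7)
The plan is to split $W_{\lfloor n\lambda\rfloor, n}$ according to whether the summation indices $i,j$ lie to the left or right of the change-point $k_0$, then substitute $X_i = \mu + Y_i$ for $i\le k_0$ and $X_i = \mu + h_n + Y_i$ for $i > k_0$. This separates each indicator $1_{\{X_i \le X_j\}}$ into one of two shapes: the pure $1_{\{Y_i \le Y_j\}}$ (when $i,j$ are on the same side of $k_0$) or the shifted $1_{\{Y_i \le Y_j + h_n\}}$ (when $i \le k_0 < j$). Having done that, I would further center each indicator around its stationary expectation, thereby peeling off a deterministic ``signal'' term from a mean-zero ``noise'' term.

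For $\lambda \le \tau$, so that $\lfloor n\lambda\rfloor \le k_0$, the decomposition reads
\begin{align*}
W_{\lfloor n\lambda\rfloor, n} = I_1(\lambda) + I_2(\lambda) + S_n(\lambda),
\end{align*}
with $I_1(\lambda) := \sum_{i=1}^{\lfloor n\lambda\rfloor}\sum_{j=\lfloor n\lambda\rfloor+1}^{k_0}(1_{\{Y_i \le Y_j\}} - \tfrac12)$ a two-sample Wilcoxon statistic for the stationary sequence, $I_2(\lambda) := \sum_{i=1}^{\lfloor n\lambda\rfloor}\sum_{j=k_0+1}^{n}(1_{\{Y_i \le Y_j + h_n\}} - \int F(x+h_n)\,dF(x))$ a centered shifted analogue, and signal
\begin{align*}
S_n(\lambda) = \lfloor n\lambda\rfloor(n - k_0)\int \left(F(x+h_n) - F(x)\right)\,dF(x),
\end{align*}
using $\int F\,dF = 1/2$. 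The case $\lambda > \tau$ is handled symmetrically, producing a signal proportional to $k_0(n-\lfloor n\lambda\rfloor)$. Dividing $S_n(\lambda)$ by $n^2 h_n$ and using either the fixed-$h$ identity or, under local changes, the bounded-density estimate $h_n^{-1}\int (F(x+h_n)-F(x))\,dF(x) \to \int f^2(x)\,dx$ (by dominated convergence applied to the difference quotient), yields the declared limit $C\delta_\tau(\lambda)$.

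For the noise terms $I_1$ and $I_2$, I would invoke Theorem 1.1 of \cite{DehlingRoochTaqqu2013a} (equivalently the empirical-process non-central limit theorem of \cite{DehlingTaqqu1989}) to show they are $O_P(n\,d_{n,r})$. A Hoeffding-style decomposition of the shifted kernel gives
\begin{align*}
1_{\{Y_i \le Y_j + h_n\}} - \theta_n = \bigl(1 - F(Y_i - h_n) - \theta_n\bigr) + \bigl(F(Y_j + h_n) - \theta_n\bigr) + R_n(Y_i, Y_j),
\end{align*}
with $\theta_n := \int F(x+h_n)\,dF(x)$. Summed in $i$ and $j$, the two linear projections reduce to partial sums of bounded subordinated functionals of the LRD Gaussian sequence $(\xi_i)$, each of which is $O_P(d_{n,r})$ by the Hermite-rank argument; the degenerate remainder is of smaller order since under LRD the linear parts dominate. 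An identical argument handles $I_1$. Dividing the noise contribution by $n^2 h_n$ therefore produces $O_P(d_{n,r}/(n h_n))$, which vanishes under the assumption $h_n^{-1} = o(n/d_{n,r})$, and trivially when $h_n = h$ since $d_{n,r}/n \to 0$.

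The main obstacle is the uniform control of the shifted empirical process as $h_n$ shrinks: one needs the $O_P(d_{n,r})$ bound to be stable under the perturbation $x\mapsto x \pm h_n$. This is delivered by the empirical-process result of \cite{DehlingTaqqu1989}, which already gives the bound uniformly in the spatial argument $x\in\mathbb R$; plugging $x = Y_j + h_n$ and $x = Y_i - h_n$ therefore preserves the rate. Once this uniformity is in hand, combining the signal computation with the noise bound gives the claimed convergence $\frac{1}{n^2 h_n}W_{\lfloor n\lambda\rfloor, n} \overset{P}{\longrightarrow} C\delta_\tau(\lambda)$, with the two cases $\lambda \le \tau$ and $\lambda > \tau$ producing the two branches of $\delta_\tau$.
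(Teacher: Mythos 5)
Your proposal is correct, and it rests on the same initial step as the paper: splitting the double sum at $k_0$ so that same-side pairs contribute $1_{\{Y_i\le Y_j\}}$ and cross-change pairs contribute $1_{\{Y_i\le Y_j+h_n\}}$, with the deterministic signal $\lfloor n\lambda\rfloor(n-k_0)\int(F(x+h_n)-F(x))\,dF(x)$ emerging from the centering $\theta_n-\tfrac12$. Where you diverge is in how the stochastic remainder is controlled. The paper treats the two regimes by two separate citations: for fixed $h$ it invokes Lemma 1 of \cite{Betken2016}, which is a law-of-large-numbers statement for the cross-term, and for local changes it invokes the full functional limit theorem for the Wilcoxon process under local alternatives (Theorem 3.1 of \cite{DehlingRoochTaqqu2013b}), then kills the stochastic part using $h_n^{-1}=o(n/d_{n,r})$. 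You instead give a single unified argument: extract the signal once, and bound both noise terms $I_1,I_2$ by $\mathcal{O}_P(nd_{n,r})$ uniformly in $\lambda$ via the Hoeffding decomposition and the Dehling--Taqqu uniform empirical-process non-central limit theorem, which suffices in both regimes since $nd_{n,r}=o(n^2h_n)$ either trivially (fixed $h$) or by hypothesis (local changes). This buys self-containedness and symmetry between the two cases at the cost of having to justify that the degenerate part of the Hoeffding decomposition is of smaller order; that claim is correct here (it follows from the uniform-in-$x$ reduction principle, e.g.\ by writing the degenerate term as $\int(F_{\lfloor n\lambda\rfloor}(\cdot+h_n)-F(\cdot+h_n))\,d(F_{k_0+1,n}-F)$ and bounding it by a product of two empirical-process sup-norms, giving $\mathcal{O}_P(d_{n,r}^2)=o_P(nd_{n,r})$), but you should spell it out rather than assert it, since it is the only place where your argument could conceivably fail.
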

 
\begin{proof}
First, consider the case $h_n=h$ with $h\neq 0$.
For $\lfloor n\lambda\rfloor\leq \lfloor n\tau\rfloor$
we have
\begin{align*}
&\frac{1}{n^2}\sum \limits_{i=1}^{\lfloor n\lambda\rfloor}\sum \limits_{j=\lfloor n\lambda\rfloor+1}^n\left(1_{\left\{X_i\leq X_j\right\}}-\frac{1}{2}\right)\\
&=\frac{1}{n^2}\sum \limits_{i=1}^{\lfloor n\lambda\rfloor}\sum \limits_{j=\lfloor n\tau\rfloor+1}^n\left(1_{\left\{Y_i\leq Y_j+h\right\}}-\frac{1}{2}\right)
 +\frac{1}{n^2}\sum \limits_{i=1}^{\lfloor n\lambda\rfloor}\sum \limits_{j=\lfloor n\lambda\rfloor+1}^{\lfloor n\tau\rfloor}\left(1_{\left\{Y_i\leq Y_j\right\}}-\frac{1}{2}\right).
\end{align*}
By Lemma 1 in  \cite{Betken2016} the first summand on the right hand side of the equation converges in probability to $\lambda(1-\tau)\int\left(F(x+h)-F(x)\right)dF(x)$ uniformly in $\lambda\leq \tau$. The second summand vanishes as $n$ tends to $\infty$.

If $\lfloor n\lambda\rfloor> \lfloor n\tau\rfloor$, 
\begin{align*}
&\frac{1}{n^2}\sum \limits_{i=1}^{\lfloor n\lambda\rfloor}\sum \limits_{j=\lfloor n\lambda\rfloor+1}^n\left(1_{\left\{X_i\leq X_j\right\}}-\frac{1}{2}\right)\\
&=\frac{1}{n^2}\sum \limits_{i=1}^{\lfloor n\tau\rfloor}\sum \limits_{j=\lfloor n\lambda\rfloor+1}^n\left(1_{\left\{Y_i\leq Y_j+h\right\}}-\frac{1}{2}\right)+\frac{1}{n^2}\sum \limits_{i=\lfloor n\tau\rfloor +1}^{\lfloor n\lambda\rfloor}\sum \limits_{j=\lfloor n\lambda\rfloor+1}^{n}\left(1_{\left\{Y_i\leq Y_j\right\}}-\frac{1}{2}\right).
\end{align*}
In this case,
the first summand on the right hand side of the equation converges in probability to $(1-\lambda)\tau\int\left(F(x+h)-F(x)\right)dF(x)$ uniformly in $\lambda\geq\tau$   due to Lemma 1 in \cite{Betken2016} while the second summand  converges  in probability to zero.
All in all, it follows that
\begin{align*}
\frac{1}{n^2}\sum\limits_{i=1}^{\lfloor n\lambda\rfloor }\sum\limits_{j=\lfloor n\lambda\rfloor +1}^{n}\left(1_{\left\{X_i\leq X_j\right\}}-\frac{1}{2}\right)
\overset{P}{\longrightarrow}\delta_{\tau}(\lambda)\int\left(F(x+h)-F(x)\right)dF(x)
\end{align*}
uniformly in $\lambda\in[0,1]$.

If $\lim_{n\rightarrow \infty}h_n= 0$, the process
\begin{align*}
&\frac{1}{nd_{n, r}}\sum\limits_{i=1}^{\lfloor n\lambda\rfloor}\sum\limits_{j=\lfloor n\lambda\rfloor+1}^n\left(1_{\left\{X_i\leq X_j\right\}}-\frac{1}{2}\right)\\
&-\frac{n}{d_{n, r}}\delta_{\tau}(\lambda)\int\left(F(x+h_n)-F(x)\right)dF(x),  \ 0\leq \lambda \leq 1,
\end{align*}
converges in distribution to
\begin{align*}
\frac{1}{r!}\int J_r(x)dF(x)\left(Z_H^{(r)}(\lambda)-\lambda Z_H^{(r)}(1)\right), \ 0\leq\lambda \leq 1,
\end{align*}
due to Theorem 3.1 in \cite{DehlingRoochTaqqu2013b}.
By assumption $h_n^{-1}=o\left(\frac{n}{d_{n, r}}\right)$, so that
\begin{align*}
\frac{1}{n^2h_n}\sum\limits_{i=1}^{\lfloor n\lambda\rfloor}\sum\limits_{j=\lfloor n\lambda\rfloor+1}^n\left(1_{\left\{X_i\leq X_j\right\}}-\frac{1}{2}\right)\overset{P}{\longrightarrow}  \delta_{\tau}(\lambda)\int f^2(x)dx, \ 0\leq\lambda \leq 1.
\end{align*}
\end{proof}

The proof of Theorem \ref{convergence rate}, which establishes a convergence rate for the estimator $\hat{k}_{\text{W}}$, requires the following  result:

\begin{Lem}\label{Lem}
Suppose that Assumption \ref{ass:subordination} holds and let  $h_n$, $n\in \mathbb{N}$,  be a sequence of real numbers with $\lim_{n\rightarrow \infty}h_n=h$.
\begin{enumerate}
\item 
The process
\begin{align*}
\frac{1}{nd_{n, r}}\sum\limits_{i=1}^{\lfloor n\lambda\rfloor}\sum\limits_{j=\lfloor n\tau\rfloor+1}^n\left(1_{\left\{Y_i\leq Y_j+h_n\right\}}-\int F(x+h_n)dF(x)\right), \ 0\leq\lambda \leq \tau,
\end{align*}
converges in distribution to
\begin{align*}
&(1-\tau)\frac{1}{r!}Z_H^{(r)}(\lambda)\int J_r(x+h)dF(x)\\
&-\lambda\frac{1}{r!}\left(Z_H^{(r)}(1)-Z_H^{(r)}(\tau)\right)\int J_r(x)dF(x+h)
\end{align*}
uniformly in $\lambda\leq \tau$.
\item
The process
\begin{align*}
\frac{1}{nd_{n, r}}\sum\limits_{i=1}^{\lfloor n\tau\rfloor}\sum\limits_{j=\lfloor n\lambda\rfloor+1}^n\left(1_{\left\{Y_i\leq Y_j+h_n\right\}}-\int F(x+h_n)dF(x)\right), \ \tau\leq\lambda \leq 1,
\end{align*}
converges in distribution to
\begin{align*}
&(1-\lambda)\frac{1}{r!}Z_H^{(r)}(\tau)\int J_r(x+h)dF(x)\\
&-\tau\frac{1}{r!}\left(Z_H^{(r)}(1)-Z_H^{(r)}(\lambda)\right)\int J_r(x)dF(x+h)
\end{align*}
uniformly in $\lambda\geq \tau$.
\end{enumerate}
\end{Lem}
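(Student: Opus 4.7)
The plan is to express the double sum in terms of empirical distribution functions, split the resulting centered expression into two pieces in which the randomness coming from the first block $Y_1,\dots,Y_{\lfloor n\lambda\rfloor}$ is separated from the randomness of the last block $Y_{\lfloor n\tau\rfloor+1},\dots,Y_n$, and then pass to the limit using the empirical process non-central limit theorem of \cite{DehlingTaqqu1989} in its almost-sure Skorohod form already invoked in the excerpt. The two blocks are disjoint in index, but they share the underlying Gaussian sequence, and the non-central limit theorem produces the \emph{same} Hermite process $Z_H^{(r)}$ evaluated at different arguments, which is exactly what appears on the right-hand side of the lemma.

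Concretely, for Part 1, I would first write
\begin{align*}
\sum_{i=1}^{\lfloor n\lambda\rfloor}\sum_{j=\lfloor n\tau\rfloor+1}^n 1_{\{Y_i\le Y_j+h_n\}}
=\lfloor n\lambda\rfloor(n-\lfloor n\tau\rfloor)\int F_{\lfloor n\lambda\rfloor}(x+h_n)\,dF_{\lfloor n\tau\rfloor+1,n}(x),
\end{align*}
and then centre by subtracting $\lfloor n\lambda\rfloor(n-\lfloor n\tau\rfloor)\int F(x+h_n)\,dF(x)$. Adding and subtracting the mixed term $\int F(x+h_n)\,dF_{\lfloor n\tau\rfloor+1,n}(x)$ and performing integration by parts in the second piece (the boundary terms vanish because $F$ is a distribution function) yields the decomposition
\begin{align*}
&\int F_{\lfloor n\lambda\rfloor}(x+h_n)\,dF_{\lfloor n\tau\rfloor+1,n}(x)-\int F(x+h_n)\,dF(x)\\
&\quad =\int\bigl[F_{\lfloor n\lambda\rfloor}(x+h_n)-F(x+h_n)\bigr]\,dF_{\lfloor n\tau\rfloor+1,n}(x)\\
&\qquad -\int\bigl[F_{\lfloor n\tau\rfloor+1,n}(x)-F(x)\bigr]\,dF(x+h_n).
\end{align*}

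After multiplying by $(nd_{n,r})^{-1}\lfloor n\lambda\rfloor(n-\lfloor n\tau\rfloor)$ and rearranging the prefactors into $\frac{n-\lfloor n\tau\rfloor}{n}$ and $\frac{\lfloor n\lambda\rfloor}{n}$, the two terms are exactly of the form to which the empirical process NCLT applies. Using the Skorohod-type uniform almost sure representation recalled before the proof of Proposition \ref{Prop:consistency},
\begin{align*}
\sup_{x,\,\mu\in[0,1]}\Bigl|d_{n,r}^{-1}\lfloor n\mu\rfloor\bigl(F_{\lfloor n\mu\rfloor}(x)-F(x)\bigr)-\tfrac{1}{r!}J_r(x)Z_H^{(r)}(\mu)\Bigr|\longrightarrow 0\ \text{a.s.},
\end{align*}
evaluated once at $\mu=\lambda$ with argument $x+h_n$ and once via the stationary increment $Z_H^{(r)}(1)-Z_H^{(r)}(\tau)$ for the tail block gives, together with $\lfloor n\lambda\rfloor/n\to\lambda$ and $(n-\lfloor n\tau\rfloor)/n\to 1-\tau$, the asserted limit
\begin{align*}
(1-\tau)\tfrac{1}{r!}Z_H^{(r)}(\lambda)\!\int\! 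J_r(x+h)\,dF(x)-\lambda\tfrac{1}{r!}\bigl(Z_H^{(r)}(1)-Z_H^{(r)}(\tau)\bigr)\!\int\! J_r(x)\,dF(x+h).
\end{align*}
Uniform convergence in $\lambda\in[0,\tau]$ is inherited from the uniform supremum above. Part 2 is handled by the same template: express the truncated double sum as $\lfloor n\tau\rfloor(n-\lfloor n\lambda\rfloor)\int F_{\lfloor n\tau\rfloor}(x+h_n)\,dF_{\lfloor n\lambda\rfloor+1,n}(x)$, centre, split and integrate by parts, then apply the uniform empirical NCLT with $\mu=\tau$ on the first block and with the increment $Z_H^{(r)}(1)-Z_H^{(r)}(\lambda)$ on the tail block.

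The main obstacles I anticipate are two. First, the $h_n$ inside the indicators must be replaced by its limit $h$ in the Hermite coefficient $J_r$ and in the outer measure $dF(x+h_n)$; this is where the continuity of $J_r$ and of $F$ (the latter is part of Assumption \ref{ass:subordination}) is used, together with boundedness of $J_r$ to dominate. Second, the two integrals above involve integration of a process uniformly close to a deterministic Hermite-weighted expression against a random measure ($dF_{\lfloor n\tau\rfloor+1,n}$) in one case and a shifted deterministic measure in the other; passage to the limit requires the uniform-in-$x$ closeness provided by the Skorohod version of the NCLT and a routine weak-convergence argument for $F_{\lfloor n\tau\rfloor+1,n}\Rightarrow F$ (Glivenko--Cantelli in the LRD setting, which follows from the same NCLT since $d_{n,r}/n\to 0$). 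Once these two technical points are in place, the algebraic bookkeeping collapses to the stated formula.
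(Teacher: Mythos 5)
Your proposal is correct and follows essentially the same route as the paper: the same representation of the double sum via $\int F_{\lfloor n\lambda\rfloor}(x+h_n)\,dF_{\lfloor n\tau\rfloor+1,n}(x)$, the same add-and-subtract decomposition into a first-block empirical-process term and a tail-block term, and the same passage to the limit via the almost-sure uniform Skorohod form of the Dehling--Taqqu non-central limit theorem, with the $h_n\to h$ replacement and the substitution of the random measure $dF_{\lfloor n\tau\rfloor+1,n}$ by $dF$ handled exactly as you anticipate (the paper settles the latter two points by rewriting $\int J_r(x+h)\,dF_{\lfloor n\tau\rfloor+1,n}(x)$ as an integral against $H_r(y)\varphi(y)\,dy$ and invoking Glivenko--Cantelli, dominated convergence, and the ergodic theorem). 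The only cosmetic difference is that you perform the integration by parts on the tail-block term immediately, whereas the paper defers it, so no substantive divergence.
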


\begin{proof}
We give a proof for the first assertion only as the convergence of the second term follows by an analogous argument.
The steps in  this proof correspond  to the argument that proves Theorem 1.1 in  \cite{DehlingRoochTaqqu2013a}.

For $\lambda\leq \tau$ it follows that
\begin{align*}
\sum\limits_{i=1}^{\lfloor n\lambda\rfloor}\sum\limits_{j=\lfloor n\tau\rfloor+1}^n1_{\left\{Y_i\leq Y_j+h_n\right\}}
&=\left(n-\lfloor n\tau\rfloor\right)\lfloor n\lambda\rfloor
\int F_{\lfloor n\lambda\rfloor}( x+h_n)
dF_{\lfloor n \tau \rfloor +1, n}(x).
\end{align*}
This yields the following decomposition:
\begin{align}\label{decomposition}
&\frac{1}{nd_{n, r}}\sum\limits_{i=1}^{\lfloor n\lambda\rfloor}\sum\limits_{j=\lfloor n\tau\rfloor+1}^n\left(1_{\left\{Y_i\leq Y_j+h_n\right\}}-\int F(x+h_n)dF(x)\right)\\
&=\frac{n-\lfloor n\tau\rfloor}{n}d_{n, r}^{-1}\lfloor n\lambda\rfloor
\int\left( F_{\lfloor n\lambda\rfloor}( x+h_n)
-F(x+h_n)\right)dF_{\lfloor n \tau \rfloor +1, n}(x)\notag\\
&\quad +\frac{n-\lfloor n\tau\rfloor}{n}d_{n, r}^{-1}\lfloor n\lambda\rfloor
\int F(x+h_n)
d\left(F_{\lfloor n \tau \rfloor +1, n}-F\right)(x).\notag
\end{align}

For the first summand we have
\begin{align*}
&\sup\limits_{0\leq\lambda\leq \tau}\Bigl|d_{n, r}^{-1}\lfloor n\lambda\rfloor
\int \left(F_{\lfloor n\lambda\rfloor}( x+h_n)
-F(x+h_n)\right)dF_{\lfloor n \tau \rfloor +1, n}(x)\\
&\qquad \ \ \ -\frac{1}{r!}Z_H^{(r)}(\lambda)\int J_r(x+h)dF(x)\Bigr|\\
&\leq\sup\limits_{0\leq\lambda\leq \tau}\Bigl|\int d_{n, r}^{-1}\lfloor n\lambda\rfloor\left(F_{\lfloor n\lambda\rfloor}( x+h_n)
-F(x+h_n)\right)\\
&\qquad\quad \ \ \ -\frac{1}{r!}Z_H^{(r)}(\lambda)J_r(x+h_n)dF_{\lfloor n \tau \rfloor +1, n}(x)\Bigr|\\
&\quad +\frac{1}{r!}\sup\limits_{0\leq\lambda\leq \tau}\left|Z_H^{(r)}(\lambda)\right|\left|\int\left(J_r(x+h_n)-J_r(x+h)\right)dF_{\lfloor n \tau \rfloor +1, n}(x)\right|\\
&\quad +\frac{1}{r!}\sup\limits_{0\leq\lambda\leq \tau}\left|Z_H^{(r)}(\lambda)\right|\left|\int J_r(x+h)d\left(F_{\lfloor n \tau \rfloor +1, n}-F\right)(x)\right|.
\end{align*}

We will show 
that each of the summands on the right hand side converges to $0$. 
The first summand converges to $0$ because
of the empirical non-central limit theorem of \cite{DehlingTaqqu1989}.
In order to show convergence of the second and third summand, note that $\sup_{0\leq\lambda\leq \tau}|Z_H^{(r)}(\lambda)|<\infty$ a.s. since the sample paths of the Hermite processes  are almost surely continuous.
 
Furthermore, we have 
\begin{align*}
\int J_r(x+h)dF_{\lfloor n \tau \rfloor +1, n}(x)
&=-\int \int 1_{\left\{x+h\leq G(y)\right\}}H_r(y)\varphi(y)dy dF_{\lfloor n \tau \rfloor +1, n}(x)\\
&=-\int \int 1_{\left\{x\leq G(y)-h\right\}}dF_{\lfloor n \tau \rfloor +1, n}(x)H_r(y)\varphi(y)dy \\
&=-\int F_{\lfloor n \tau \rfloor +1, n}(G(y)-h) H_r(y)\varphi(y)dy.
\end{align*}
Analogously, it follows that
\begin{align*}
\int J_r(x+h_n)dF_{\lfloor n \tau \rfloor +1, n}(x)=-\int F_{\lfloor n \tau \rfloor +1, n}(G(y)-h_n) H_r(y)\varphi(y)dy.
\end{align*}
Therefore, we may conclude that
\begin{align*}
&\left|\int \left(J_r(x+h_n)-J_r(x+h)\right)dF_{\lfloor n \tau \rfloor +1, n}(x)\right|\\
&\leq  2\sup\limits_{x\in\mathbb{R}}\left|F_{\lfloor n \tau \rfloor +1, n}(x)-F(x)\right|\int \left|H_r(y)\right|\varphi(y)dy\\
&\quad + \int \left|F(G(y)-h_n)-F(G(y)-h)\right|\left|H_r(y)\right|\varphi(y)dy.
\end{align*}
The first expression on the right hand side converges to $0$ by the Glivenko-Cantelli theorem and the fact that $\int \left|H_r(y)\right|\varphi(y)dy<\infty$;  
the second expression converges to $0$ due to continuity of $F$ and the dominated convergence theorem.

To show convergence of 	the  third summand note that
\begin{align*}
&\left|\int J_r(x+h)d\left(F_{\lfloor n \tau \rfloor +1, n}(x)-F(x)\right)\right|\\
&=\frac{1}{n-\lfloor n\tau\rfloor}\left|\sum\limits_{i=\lfloor n\tau\rfloor +1}^n\left(J_r(Y_i+h)-\E \ J_r(Y_i+h)\right)\right|\\
&\leq\frac{n}{n-\lfloor n\tau\rfloor}\frac{1}{n}\left|\sum\limits_{i=1}^n\left(J_r(Y_i+h)-\E \ J_r(Y_i+h) \right)\right|\\
&\quad+\frac{\lfloor n\tau\rfloor}{n-\lfloor n\tau\rfloor}\frac{1}{\lfloor n\tau\rfloor}\left|\sum\limits_{i=1}^{\lfloor n\tau\rfloor}\left(J_r(Y_i+h)-\E \ J_r(Y_i+h)\right)\right|.
\end{align*}
For both summands on the right hand side of the above inequality
the ergodic theorem implies almost sure convergence to $0$.

For the second summand in \eqref{decomposition} we have
\begin{align*}
&\frac{n-\lfloor n\tau\rfloor}{n}d_{n, r}^{-1}\lfloor n\lambda\rfloor
\int F(x+h_n)
d\left(F_{\lfloor n \tau \rfloor +1, n}-F\right)(x)\\
&=-\frac{\lfloor n\lambda\rfloor}{n}d_{n, r}^{-1}(n-\lfloor n\tau\rfloor)
\int
\left(F_{\lfloor n \tau \rfloor +1, n}(x)-F(x)\right)dF(x+h_n).
\end{align*}
Since $\frac{\lfloor n\lambda\rfloor}{n}\longrightarrow \lambda$ uniformly in $\lambda$, consider
\begin{align*}
&\Biggl|d_{n, r}^{-1}(n-\lfloor n\tau\rfloor)
\int
\left(F_{\lfloor n \tau \rfloor +1, n}(x)-F(x)\right)dF(x+h_n)\\
& \ -\frac{1}{r!}(Z_H^{(r)}(1)-Z_H^{(r)}(\tau))\int J_r(x)dF(x+h_n)\Biggr|\\
&\leq
\left|\int
d_{n, r}^{-1}n\left(F_{n}(x)-F(x)\right)-\frac{1}{r!}Z_H^{(r)}(1) J_r(x)dF(x+h)\right|\\
&\quad +\left|\int
d_{n, r}^{-1}\lfloor n\tau\rfloor\left(F_{\lfloor n\tau\rfloor}(x)-F(x)\right)-\frac{1}{r!}Z_H^{(r)}(\tau) J_r(x)dF(x+h_n)\right|\\
&\quad +\frac{1}{r!}\left|Z_H^{(r)}(1)-Z_H^{(r)}(\tau)\right|\left|\int J_r(x)d\left(F(x+h_n)-F(x+h)\right)\right|
.
\end{align*}
The first and second summand on the right hand side converge to $0$ because
of the  empirical process non-central limit theorem.
For the third summand we have
\begin{align*}
\left|\int J_r(x)d\left(F(x+h_n)-F(x+h)\right)\right|
=\left|\int \left(J_r(x-h_n)-J_r(x-h)\right)dF(x)\right|.
\end{align*}
As shown before in this proof, convergence to $0$ follows by the Glivenko-Cantelli theorem and the dominated convergence theorem.
\end{proof}

Lemma \ref{Lem:int_sq_dens} and Lemma \ref{Lem:sargmax} are needed for the proof of Theorem \ref{thm:asymp_distr}.
\begin{Lem}\label{Lem:int_sq_dens}
Suppose that Assumption \ref{ass:subordination} holds and let $l_n$, $n\in \mathbb{N}$, and $h_n$, $n\in \mathbb{N}$,  be two sequences with
 $l_n\rightarrow \infty$, $\lim_{n\rightarrow \infty}h_n =h$ and $l_n=\mathcal{O}(n)$.
Then, it holds that
\begin{align}
\sup\limits_{0\leq s \leq 1}\Biggl|& d_{l_n, r}^{-1}\lfloor l_ns\rfloor  \int\left
(F_{\lfloor l_ns\rfloor}(x+h_n)-F_{\lfloor l_ns\rfloor}(x+h)\right)dF_{n}(x)\notag\\
&-d_{l_n, r}^{-1}\lfloor l_ns\rfloor  \int\left
(F(x+h_n)-F(x+h)\right)dF(x)\Biggr|\label{Lem3:1}
\intertext{and}
\sup\limits_{0\leq s \leq 1}\Biggl| &d_{l_n, r}^{-1}\lfloor l_ns\rfloor  \int\left
(F_{n}(x+h_n)-F_{n}(x+h)\right)dF_{\lfloor l_ns\rfloor}(x)\notag\\
&-d_{l_n, r}^{-1}\lfloor l_ns\rfloor  \int\left
(F(x+h_n)-F(x+h)\right)dF(x)\Biggr|\label{Lem3:2}
\end{align}
converge to $0$ almost surely.
\end{Lem}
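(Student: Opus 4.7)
I would prove both \eqref{Lem3:1} and \eqref{Lem3:2} by the same three-term decomposition. Writing $m:=\lfloor l_ns\rfloor$, the integrand inside the supremum in \eqref{Lem3:1} equals
\begin{align*}
&\int(F_m(x+h_n)-F_m(x+h))\,dF_n(x)-\int(F(x+h_n)-F(x+h))\,dF(x)\\
&=\int(F_m-F)(x+h_n)\,dF_n(x)-\int(F_m-F)(x+h)\,dF_n(x)\\
&\quad+\int\bigl[F(x+h_n)-F(x+h)\bigr]\,d(F_n-F)(x);
\end{align*}
for \eqref{Lem3:2} I would swap the roles of $F_m$ and $F_n$ above. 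Multiplying through by $d_{l_n,r}^{-1}m$, the plan is to show that each summand is $o(1)$ uniformly in $s\in[0,1]$ almost surely.

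For the first two summands I would invoke the a.s.\ uniform Skorohod version of the Dehling--Taqqu empirical process non-central limit theorem stated immediately before the proof of Proposition~\ref{Prop:consistency}, applied to $(l_n)$. This gives, uniformly in $(x,s)$ almost surely,
\[
d_{l_n,r}^{-1}m\,(F_m-F)(x)=\tfrac{1}{r!}J_r(x)Z_H^{(r)}(s)+o(1),
\]
so that the first minus the second summand of \eqref{Lem3:1}, after normalization, becomes $\tfrac{1}{r!}Z_H^{(r)}(s)\int[J_r(x+h_n)-J_r(x+h)]\,dF_n(x)+o(1)$. Since $J_r$ is bounded, continuous, and satisfies $J_r(x)\to0$ as $|x|\to\infty$, it is uniformly continuous on $\mathbb{R}$; together with $h_n\to h$ and a.s.\ boundedness of $Z_H^{(r)}$ on $[0,1]$, this is $o(1)$ uniformly in $s$. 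For \eqref{Lem3:2} the same cancellation works after noting that $(m/l_n)\cdot g_{D,r}(l_n)/g_{D,r}(n)=O(1)$, by $l_n=\mathcal{O}(n)$ and regular variation of $g_{D,r}$.

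The delicate summand is the third one, since Lemma~\ref{Lem:int_sq_dens} makes no smoothness assumption on $F$ and the prefactor $d_{l_n,r}^{-1}m$ can diverge. The key idea is to integrate by parts---legitimate because $F(\cdot+h_n)-F(\cdot+h)$ is continuous of bounded variation and all boundary contributions vanish at $\pm\infty$---to rewrite the third summand for \eqref{Lem3:1} as
\[
\int(F_n-F)(y-h)\,dF(y)-\int(F_n-F)(y-h_n)\,dF(y).
\]
Applying the Skorohod representation to each integral on the right, the potentially divergent leading Hermite term $\propto Z_H^{(r)}(1)\int J_r(y)\,dF(y)$ appears in both and cancels, leaving a remainder proportional to $\int[J_r(y-h)-J_r(y-h_n)]\,dF(y)$ plus a uniform $o(1)$ Skorohod error; both tend to zero by uniform continuity of $J_r$ and dominated convergence. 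For \eqref{Lem3:2} the analogous step uses the Skorohod representation for $F_m$, with $Z_H^{(r)}(s)$ in place of $Z_H^{(r)}(1)$; the same cancellation yields a uniform-in-$s$ $o(1)$.

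The main obstacle is exactly this third summand: one has to trade the missing smoothness of $F$ for the smoothness of $J_r$, which is precisely what integration by parts accomplishes; the explicit Hermite-rank-$r$ cancellation built into the Skorohod representation then delivers the required uniform-in-$s$ smallness despite the potentially divergent prefactor $d_{l_n,r}^{-1}m$.
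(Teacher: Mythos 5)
Your proposal is correct and follows essentially the same route as the paper: the identical three-term decomposition, the a.s.\ uniform Skorohod version of the Dehling--Taqqu non-central limit theorem applied at scales $l_n$ and $n$ (with $l_n=\mathcal{O}(n)$ and regular variation of $g_{D,r}$ to compare the normalizations), and an integration by parts on the third summand so that the two $O(1)$ leading terms differ only by $\tfrac{1}{r!}Z_H^{(r)}(1)\int\left(J_r(y-h)-J_r(y-h_n)\right)dF(y)\to 0$. The only cosmetic difference is that you dispose of the $J_r$-difference integrals via uniform continuity of $J_r$ (which does hold: $J_r$ is continuous because $F$ is, is bounded, and tends to $0$ at $\pm\infty$), whereas the paper rewrites $\int J_r(x+h)\,dF_n(x)=-\int F_n(G(y)-h)H_r(y)\varphi(y)\,dy$ and invokes Glivenko--Cantelli together with dominated convergence.
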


\begin{proof}
For the expression \eqref{Lem3:1}
the triangle inequality yields
\begin{align*}
&\sup\limits_{0\leq s \leq 1}\Biggl| d_{l_n, r}^{-1}\lfloor l_ns\rfloor  \int\left
(F_{\lfloor l_ns\rfloor}(x+h_n)-F_{\lfloor l_ns\rfloor}(x+h)\right)dF_n(x)\\
&\qquad \ \ \ -d_{l_n, r}^{-1}\lfloor l_ns\rfloor  \int\left
(F(x+h_n)-F(x+h)\right)dF(x)\Biggr|\\
&\leq 2\sup\limits_{s\in \left[0, 1\right], x\in \mathbb{R}}\left|
d_{l_n, r}^{-1}\lfloor l_ns\rfloor \left(F_{\lfloor l_ns\rfloor}(x)-F(x)\right)-\frac{1}{r!}Z_H^{(r)}(s)J_r(x)\right|\\
&\quad+\frac{1}{r!}\sup\limits_{0\leq s \leq 1}\left|Z_H^{(r)}(s)\right|
\left| \int (J_r(x+h_n)-J_r(x+h))dF_{n}(x)\right|\\
&\quad+\left|d_{l_n, r}^{-1}l_n  \int\left
(F(x+h_n)-F(x+h)\right)d\left(F_{n}-F\right)(x)\right|.
\end{align*}
The first summand converges to $0$ because of the empirical non-central limit theorem.
Moreover, 
$\sup_{0\leq s \leq 1}\left|Z_H^{(r)}(s)\right|<\infty$ a.s. due to the fact that  $Z_H^{(r)}$ is continuous with probability $1$.
It  is shown in the proof of Lemma \ref{Lem} that $\left| \int (J_r(x+h_n)-J_r(x+h))dF_{n}(x)\right|\longrightarrow 0$. As a result, the second summand vanishes as  $n$ tends to $\infty$. 

Furthermore, note that
\begin{align*}
&\left|d_{l_n, r}^{-1}l_n \int\left
(F(x+h_n)-F(x+h)\right)d\left(F_{n}-F\right)(x)\right| \\
&\leq K\left| \int \left(d_{n, r}^{-1}n\left(F_{n}(x)-F(x)\right)-\frac{1}{r!}Z_H^{(r)}(1)J_r(x)\right)dF(x+h_n)\right|\\
&\quad + K\left| \int \left(d_{n, r}^{-1}n \left(F_{n}(x)-F(x)\right)-\frac{1}{r!}Z_H^{(r)}(1)J_r(x)\right)dF(x+h)\right| \\
&\quad+K\frac{1}{r!}\left|Z_H^{(r)}(1)\right|\left|\int J_r(x)d\left(F(x+h_n)-F(x+h)\right)\right|
\end{align*}
for some constant $K$ and $n$ sufficiently large, since $l_n=\mathcal{O}(n)$.
The first and second summand on the right hand side of the above inequality converge to $0$ due to the empirical process non-central limit theorem. In addition,  we have
\begin{align*}
\left|\int J_r(x)d\left(F(x+h_n)-F(x+h)\right)\right|
=\left|\int \left(J_r(x-h_n)-J_r(x-h)\right)dF(x)\right|
\end{align*}
Therefore, it follows by the same argument as in the proof of Lemma \ref{Lem} that the third summand converges to $0$.

Considering the term in \eqref{Lem3:2}, note that
\begin{align*}
&\sup\limits_{0\leq s \leq 1}\Biggl| d_{l_n, r}^{-1}\lfloor l_ns\rfloor  \int\left
(F_{n}(x+h_n)-F_{n}(x+h)\right)dF_{\lfloor l_ns\rfloor}(x)\\
&\qquad \ \ -d_{l_n, r}^{-1}\lfloor l_ns\rfloor  \int\left
(F(x+h_n)-F(x+h)\right)dF(x)\Biggr|\\
&\leq 2\sup\limits_{0\leq s \leq 1, x\in \mathbb{R}}\left| d_{l_n, r}^{-1}\lfloor l_ns\rfloor \left(F_{\lfloor l_ns\rfloor}(x)-F(x)\right)-\frac{1}{r!}Z_H^{(r)}(s)J_r(x)\right|\\
& \quad+\frac{1}{r!}\sup\limits_{0\leq s \leq 1}\left|
Z_H^{(r)}(s)\right| \left|\int J_r(x)d\left(F_n(x+h_n)-F_n(x+h)\right)\right|\\
&\quad+2K\sup\limits_{ x\in \mathbb{R}}\left| d_{n, r}^{-1}n \left
(F_n(x)-F(x)\right)-\frac{1}{r!}Z_H^{(r)}(1)J_r(x)\right|\\
&\quad+\frac{1}{r!}\left| Z_H^{(r)}(1)\right|\int\left|J_r(x+h_n)-J_r(x+h)\right|dF(x)
\end{align*}
for some constant $K$ and $n$ sufficiently large.
The first and third summand on the right hand side of the above inequality converge to $0$ due to the empirical process non-central limit theorem.  The last summand converges to $0$ due to the corresponding argument in the proof of Lemma  \ref{Lem}.
It holds that
\begin{align*}
& \left|\int J_r(x)d\left(F_n(x+h_n)-F_n(x+h)\right)\right|\\
&=\left|\int \left(F_n(G(y)-h_n)-F_n(G(y)-h)\right)H_r(y)\varphi(y)dy\right|\\
&\leq \left(2\sup\limits_{x \in \mathbb{R}}\left|F_n(x)-F(x)\right|+\sup\limits_{x \in \mathbb{R}}\left|F(x-h_n)-F(x-h)\right|\right)\int \left|H_r(y)\right|\varphi(y)dy.
\end{align*}
The right hand side of the above inequality converges to $0$ almost surely due to  the Glivenko-Cantelli theorem and because  $F$ is uniformly continuous. As a result, the second summand converges to $0$, as well.
\end{proof}

Lemma \ref{Lem:sargmax} establishes a condition
under which convergence in distribution of a sequence of random variables  entails convergence of the smallest argmax of the sequence. 

\begin{Lem}\label{Lem:sargmax}
Let $K$ be a compact interval and denote by $D(K)$  the corresponding Skorohod space, i.e. the collection of all functions $f:K\longrightarrow\mathbb{R}$ which are  right-continuous with left limits.
Assume that $Z_n$, $n\in \mathbb{N}$,  are random variables taking values in $D(K)$ and that $Z_n\overset{\mathcal{D}}{\longrightarrow}Z$, where (with probability $1$) $Z$ is continuous and $Z$ has a unique maximizer. Then $\sargmax(Z_n)\overset{\mathcal{D}}{\longrightarrow}\sargmax(Z)$.
\end{Lem}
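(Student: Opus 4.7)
The plan is to reduce the distributional assertion to an almost sure one via Skorohod's representation theorem. Since $D(K)$ under the Skorohod $J_1$ metric is a separable metric space, the convergence $Z_n\overset{\mathcal{D}}{\longrightarrow}Z$ can be realised on a common probability space: there exist copies $\widetilde Z_n$ and $\widetilde Z$ with $\widetilde Z_n\to\widetilde Z$ almost surely in $D(K)$, and $\widetilde Z$ is almost surely continuous with a unique maximizer. Since $\sargmax$ is a measurable functional, it suffices to prove $\sargmax(\widetilde Z_n)\to\sargmax(\widetilde Z)$ almost surely.

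The next step is to upgrade Skorohod convergence to uniform convergence. This rests on the standard fact that if $f_n\to f$ in the Skorohod metric on $D(K)$ and $f$ is continuous, then $\sup_{t\in K}|f_n(t)-f(t)|\to 0$, because the time changes in the Skorohod metric become asymptotically the identity (see e.g.\ Billingsley, \emph{Convergence of Probability Measures}). Restricting to the event of full probability on which $\widetilde Z$ is continuous, we therefore have $\sup_{t\in K}|\widetilde Z_n(t)-\widetilde Z(t)|\to 0$ almost surely.

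From here the argument is standard. Write $s^{*}=\sargmax(\widetilde Z)$ and $s_n=\sargmax(\widetilde Z_n)$. By compactness of $K$, any subsequence of $(s_n)$ admits a further subsequence $s_{n_k}\to s\in K$. Uniform convergence together with continuity of $\widetilde Z$ at $s$ gives
\begin{align*}
|\widetilde Z_{n_k}(s_{n_k})-\widetilde Z(s)|\le \sup_{t\in K}|\widetilde Z_{n_k}(t)-\widetilde Z(t)|+|\widetilde Z(s_{n_k})-\widetilde Z(s)|\longrightarrow 0,
\end{align*}
while $\widetilde Z_{n_k}(s_{n_k})=\sup_{K}\widetilde Z_{n_k}\to\sup_{K}\widetilde Z=\widetilde Z(s^{*})$, again by uniform convergence. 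Hence $\widetilde Z(s)=\widetilde Z(s^{*})$, and uniqueness of the maximizer forces $s=s^{*}$. Since every subsequence of $(s_n)$ has a further subsequence converging to the common limit $s^{*}$, the whole sequence converges to $s^{*}$, as required. The main obstacle is the upgrade from Skorohod to uniform convergence, which is exactly the place where the continuity of $Z$ is genuinely used; once this is secured, the conclusion follows from compactness of $K$ and the uniqueness of the maximizer. A minor point worth noting is that the smallest argmax of an arbitrary cadlag function on $K$ need not exist, but in the setting of Theorem \ref{thm:asymp_distr} the relevant processes are piecewise constant in $s$, so $\sargmax(Z_n)$ is well defined.
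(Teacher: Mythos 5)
Your proof is correct and follows essentially the same route as the paper: both pass to an almost surely convergent version via Skorohod's representation theorem and then use continuity of the smallest-argmax functional at continuous functions possessing a unique maximizer. The only difference is that the paper cites Lemma 2.9 of Seijo and Sen (2011) for this continuity, whereas you prove it directly by upgrading Skorohod convergence to uniform convergence (valid because the limit is continuous) and running the standard compactness/subsequence argument; your version is self-contained and equally valid, and your closing remark about well-definedness of $\sargmax(Z_n)$ for the piecewise constant processes is a sensible precaution.
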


\begin{proof}
Due to Skorohod's representation theorem there exist random variables $\tilde{Z}_n$ and $\tilde{Z}$ defined on a common probability space $(\tilde{\Omega}, \tilde{F}, \tilde{P})$, such that $\tilde{Z}_n\overset{\mathcal{D}}{=}Z_n$, $\tilde{Z}\overset{\mathcal{D}}{=}Z$ and $\tilde{Z}_n\overset{a.s.}{\longrightarrow}\tilde{Z}$. Due to Lemma 2.9 in \cite{SeijoSen2011} the smallest argmax functional is continuous at $W$ (with respect to the Skorohod-metric and  the sup-norm metric) if $W\in D(K)$ is a continuous function which has a unique maximizer. 
Since (with probability $1$) $Z$ is continuous with unique maximizer, $\sargmax(\tilde{Z}_n)\overset{a.s.}{\longrightarrow}\sargmax(\tilde{Z})$.
As almost sure convergence implies convergence in distribution, we have $\sargmax(\tilde{Z}_n)\overset{\mathcal{D}}{\longrightarrow}\sargmax(\tilde{Z})$ and therefore $\sargmax(Z_n)\overset{\mathcal{D}}{\longrightarrow}\sargmax(Z)$.
\end{proof}

\end{document}